\documentclass{siamart220329}

\usepackage{arxiv}

\usepackage[utf8]{inputenc} 
\usepackage[T1]{fontenc}    
\usepackage{hyperref}       
\usepackage{url}            
\usepackage{booktabs}       
\usepackage{amsfonts}       
\usepackage{nicefrac}       
\usepackage{microtype}      
\usepackage{lipsum}
\usepackage{graphicx}
\graphicspath{ {./images/} }

\usepackage{rotating} 

\usepackage{scalerel}
\usepackage{tikz}
\usetikzlibrary{svg.path}

\definecolor{orcidlogocol}{HTML}{A6CE39}
\tikzset{
  orcidlogo/.pic={
    \fill[orcidlogocol] svg{M256,128c0,70.7-57.3,128-128,128C57.3,256,0,198.7,0,128C0,57.3,57.3,0,128,0C198.7,0,256,57.3,256,128z};
    \fill[white] svg{M86.3,186.2H70.9V79.1h15.4v48.4V186.2z}
                 svg{M108.9,79.1h41.6c39.6,0,57,28.3,57,53.6c0,27.5-21.5,53.6-56.8,53.6h-41.8V79.1z M124.3,172.4h24.5c34.9,0,42.9-26.5,42.9-39.7c0-21.5-13.7-39.7-43.7-39.7h-23.7V172.4z}
                 svg{M88.7,56.8c0,5.5-4.5,10.1-10.1,10.1c-5.6,0-10.1-4.6-10.1-10.1c0-5.6,4.5-10.1,10.1-10.1C84.2,46.7,88.7,51.3,88.7,56.8z};
  }
}

\newcommand\orcidicon[1]{\href{https://orcid.org/#1}{\mbox{\scalerel*{
\begin{tikzpicture}[yscale=-1,transform shape]
\pic{orcidlogo};
\end{tikzpicture}
}{|}}}}


\usepackage{bm} 
\newcommand{\bx}{{\bf x}}

\newcommand{\bX}{{\bf X}}

\newcommand{\ba}{{\bf a}}
\newcommand{\bA}{{\bf A}}
\newcommand{\bb}{{\bf b}}

\newcommand{\bC}{{\bf C}}
\newcommand{\bI}{{\bf I}}
\newcommand{\bK}{{\bf K}}

\newcommand{\bQ}{{\bf Q}}

\newcommand{\bmxi}{{\bm{\xi}}}

\newcommand{\Alim}{{A_{\text{LIM}}}}
\newcommand{\Acslim}{{A_{\textit{e}}}}
\newcommand{\Aplim}{{A_{\textit{l}}}}
\newcommand{\Qlim}{{Q_{\text{LIM}}}}
\newcommand{\Qcslim}{{Q_{\textit{e}}}}
\newcommand{\Qplim}{{Q_{\textit{l}}}}

\newcommand{\Cplim}{{C_{\textit{l}}}}
\newcommand{\Tcslim}{{T_{\textit{e}}}}
\newcommand{\Tplim}{{T_{\textit{l}}}}

\newcommand{\LFP}{\textbf{L}_{\textit{FP}}}
\newcommand{\pst}{P_{\textit{st}}}
\usepackage{algorithm,algpseudocode}
\algblock{Input}{EndInput}
\algnotext{EndInput}
\algblock{Output}{EndOutput}
\algnotext{EndOutput}
\newcommand{\Desc}[2]{\State \makebox[3em][l]{#1}#2}
\newcommand{\algrule}[1][.2pt]{\par\vskip.5\baselineskip\hrule height #1\par\vskip.5\baselineskip}

\usepackage{lipsum}
\usepackage{amsfonts}
\usepackage{graphicx}
\usepackage{epstopdf}
\ifpdf
  \DeclareGraphicsExtensions{.eps,.pdf,.png,.jpg}
\else
  \DeclareGraphicsExtensions{.eps}
\fi

\newcommand{\R}{\mathbb{R}}


\newsiamremark{remark}{Remark}
\newsiamremark{hypothesis}{Hypothesis}
\crefname{hypothesis}{Hypothesis}{Hypotheses}
\newsiamthm{claim}{Claim}

\usepackage{amsmath} 
\usepackage{mathtools} 
\usepackage{booktabs,graphicx,makecell,siunitx,eqparbox,subcaption,threeparttable}
\captionsetup[table]{skip=10pt}
\usepackage{adjustbox}
\usepackage{lscape} 
\usepackage{afterpage}
\usepackage{graphicx} 
\usepackage{array}

\usepackage{tikz}
\usetikzlibrary{arrows.meta}
\tikzset{mynode/.style={draw, very thick, circle, minimum size=0.8cm}, myarrow/.style={very thick}}

\usepackage{graphicx}


\setlength\parindent{24pt}

\usepackage{hyperref} 


\title{On the cyclostationary Linear Inverse Models: a mathematical insight and implication}

\author{
    Justin Lien \\
    Mathematical Institute \\
    Tohoku University\\
    Sendai, Japan \\
    \texttt{lien.justin.t8@dc.tohoku.ac.jp} \\
    \And
    Yan-Ning Kuo \\
    Department of Earth and Atmospheric Sciences\\
    Cornell University\\
    Ithaca, New York, USA \\
    \texttt{yk545@cornell.edu} \\
    \And
    Hiroyasu Ando \\
    Advanced Institute for Materials Research\\
    Tohoku University\\
    Sendai, Japan \\
    \texttt{hiroyasu.ando.d1@tohoku.ac.jp} \\
}

\begin{document}
\maketitle
\begin{abstract}
Cyclostationary linear inverse models (CS-LIMs), generalized versions of the classical (stationary) LIM, are advanced data-driven techniques for extracting the first-order time-dependent dynamics and random forcing relevant information from complex non-linear stochastic processes. 
Though CS-LIMs lead to a breakthrough in climate sciences, their mathematical background and properties are worth further exploration.
This study focuses on the mathematical perspective of CS-LIMs and introduces two variants: $e$-CS-LIM and $l$-CS-LIM. The former refines the original CS-LIM using the interval-wise linear Markov approximation, while the latter serves as an analytic inverse model for the linear periodic stochastic systems. 
Although relying on approximation, $e$-CS-LIM converges to $l$-CS-LIM under specific conditions and shows noise-robust performance.

Numerical experiments demonstrate that each CS-LIM reveals the temporal structure of the system. 
The $e$-CS-LIM optimizes the original model for better dynamics performance, while $l$-CS-LIM excels in diffusion estimation due to reduced approximation reliance. 
Moreover, CS-LIMs are applied to real-world ENSO data, yielding a consistent result aligning with observations and current ENSO understanding.
\end{abstract}

\keywords{Data-driven \and Linear inverse model \and Inverse problem \and Cyclostationarity}

\section{Introduction} \label{Chap:Intro}

Stochastic differential equation (SDE) is a mathematical framework employed to study dynamical systems subjected to both deterministic and stochastic influences. 
It combines the deterministic part expressed through ordinary differential equations with the random forcing term formulated by Gaussian white noise, making itself invaluable across diverse disciplines \cite{Oksendal1987,sarkka_solin_2019}. 
For example, SDEs are utilized to capture the unpredictable nature of stock prices in financial mathematics and describe the erratic movement of particles in a fluid in physics \cite{Oksendal1987,Ruschendorf2023,Seifert2012}.
Furthermore, apart from modeling, SDEs also find applications in inverse problems, including the classical linear inverse model (LIM) \cite{Kwasniok2022,Penland1989,Penland1993,Penland1994}.

The classical LIM serves as a mathematical tool that extracts the linear dynamics and random forcing behavior of the underlying complex non-linear stochastic process from finite sampling data, allowing scientists to infer the underlying network dynamics and quantify uncertainties that are challenging to measure directly \cite{Penland1993,Penland1994,Penland1995}.
More precisely, consider a dynamical system of the form 
\begin{align} \label{Eq:GeneralEq}
    \frac{d}{dt} \bx = f(\bx,t,\bmxi),
\end{align}
where $f$ is the unknown system and $\bmxi$ represents the normalized Gaussian vector.  
The classical LIM approximates \cref{Eq:GeneralEq} with a linear Markov system
\begin{equation}\label{Eq:LIM-Process}
    \frac{d}{dt} \bx = \bA\bx + \sqrt{2\bQ} \bmxi,
\end{equation}
where $\bQ$ describes the covariance of noise.
We call this the linear Markov approximation.
As an Ornstein–Uhlenbeck process, the correlation function $\bK$ is an exponential function whose exponent is the constant dynamics $\bA$, which can be solved provided the values of $\bK$ at the origin and at some time lag are known.
Then, the constant diffusion matrix $\bQ$ can be obtained through the fluctuation-dissipation relation (FDR).
Therefore, both linear dynamics and diffusion can be estimated given that we have a finite realization of a stochastic process at hand.
Due to its mathematical simplicity and applicability, the classical LIM has been widely applied to climate sciences to study large-scale climate events including El Ni\~{n}o-Southern Oscillation (ENSO) \cite{Penland1993,Penland1994,Perkins2020}.
However, the classical LIM fails to reveal the seasonal variation which serves as a crucial element to understanding the complex climate system, and thus several follow-up variants of the classical LIM have been proposed \cite{Martinez2017,Penland1996}.

The original cyclostationary linear inverse model (CS-LIM), as a variant of LIMs, is a model extracting the temporal structure of a periodically driven stochastic process.
That is, the unknown system $f$ in \cref{Eq:GeneralEq} is periodic.
It first appeared in the climate science community to study the seasonal variation of ENSO through monthly sea surface temperature (SST) time-series data, showing an improved forecast skill and a more accurate ENSO characteristic compared to the analysis based on the classical LIM \cite{Kido2023,OrtizBevia1997,Shin2021}.
Though the original version has proven an effective data-driven technique in practical applications, the mathematical formulation and properties are worth further exploration.

From a mathematical perspective, under the stationary condition, the original CS-LIM first approximates the complex non-linear stochastic system by the periodic linear Markov system of the form
\begin{equation}\label{Eq:CS-LIM-Process}
    \frac{d}{dt} \bx = \bA(t)\bx + \sqrt{2\bQ(t)} \bmxi,
\end{equation}
where $\bA(t)$ and $\bQ(t)$ are periodic families of dynamical and diffusion matrices.
To estimate $\bA(t)$ and $\bQ(t)$, the original CS-LIM divides the full period into several intervals, applies the classical LIM interval-wise to extract the linear dynamics (interval-wise linear Markov approximation), and then estimates the diffusion by enforcing the periodic version of FDR. 
Therefore, unlike the classical LIM that reconstructs the linear dynamics and diffusion of a linear Markov system, the original CS-LIM does not reconstruct the periodic linear dynamics and diffusion for a process satisfying \cref{Eq:CS-LIM-Process} but rather estimates them.
Even so, it is sufficient for most of the applications.
The detailed formulation of \cref{Eq:LIM-Process,Eq:CS-LIM-Process} will be given in the later section.


In this article, we study LIMs from a mathematical perspective by providing a solid theoretical background and examining the validity of a various approximation used in the models.
In particular, we propose the $e$-CS-LIM, a variant of CS-LIM that follows the same framework as the original CS-LIM: estimating the dynamics by the interval-wise linear Markov approximation and the diffusion by periodic FDR.
However, the numerical detail is refined such that $e$-CS-LIM optimizes the original CS-LIM.
Moreover, we present a novel CS-LIM called $l$-CS-LIM that serves as an inverse model to \cref{Eq:CS-LIM-Process} by proving that the time-dependent dynamics is encoded in the first right derivative of the correlation function.
We also note that it amounts to the pointwise linear Markov approximation, contrary to the interval-wise version.

In fact, the $e$-CS-LIM and $l$-CS-LIM are closely related.
In practical implementation, the former utilizes an \textit{exponential fitting} while the latter uses a \textit{linear fitting} when estimating the linear dynamics of a general periodic stochastic process. 
The exponential fitting approaches to the linear fitting when the time-step and the time-lag are sufficiently small, and the number of intervals and the sampling size are sufficiently large.
Therefore, in such a limit, $e$-CS-LIM converges to $l$-CS-LIM in the sense that the estimated results can be arbitrarily small.
In theory, though they are both first-order models of a complex non-linear stochastic process, in practice, they are skilled at different aspects.
In the numerical experiment, we observed that owing to a larger time-step and time-lag applied in the computation, the $e$-CS-LIM is more robust to noise, leading to a more stable result. 
On the other hand, though being an analytic inverse model, $l$-CS-LIM is subject to the noisy nature of the SDE and a denoise process should be done.
Nevertheless, the $e$-CS-LIM exhibits a comparable or better accuracy than $l$-CS-LIM in linear dynamics, while the $l$-CS-LIM performs better in diffusion, providing that the sampling data is sufficient. 

The structure of the article goes as follows. In \cref{Chap:LIM}, we review the mathematical backgrounds of the classical LIM and introduce the $e$-CS-LIM.
The original CS-LIM will be a special case of $e$-CS-LIM in our framework.
In \cref{Chap:Periodic-LIM}, we develop the $l$-CS-LIM by studying the correlation function of \cref{Eq:CS-LIM-Process}, and discuss its relationship with the classical LIM and $e$-CS-LIM.
Then, the numerical experiments are presented in \cref{Chap:NumExp} to demonstrate the potential issues of the classical LIM and the original CS-LIM, the effectiveness of $e$-CS-LIM and $l$-CS-LIM, and their application. 

Before moving to the next section, we briefly explain the convention of our notation. 
The vector is assumed to be a column vector without explicitly stated. 
For continuous-time stochastic processes, random variables and their statistics are denoted in bold, and so is the dynamical matrix for the notation consistency. When referring to time-series data, we mean a discrete-time sequence of vectors with an equal sampling interval represented by $\Delta t$, and we use the regular font to denote both the time series and its statistics. 
Moreover, the true values are denoted in bold while the model outputs are in regular. 

\section{\textit{e}-CS-LIM} \label{Chap:LIM}

In this section, we present the mathematical background and idea of $e$-CS-LIM with a brief review of the classical LIM.
As all variants of LIMs utilize FDRs, we start by introducing the Fokker-Planck equation, which in turn characterizes the probability distribution of the system under stationary conditions. 
The proof can be found in standard SDE textbooks \cite{Oksendal1987,sarkka_solin_2019}.

Suppose that the stochastic process $\bx: [0,\infty) \to \R^n$ satisfies the linear dynamics with Gaussian noises random forcing as follows,
\begin{equation}\label{Eq:General-Process}
    \frac{d}{dt} \bx = \bA(t)\bx + \sqrt{2\bQ(t)} \bmxi,
\end{equation}
where $\bA(t)$ and $\bQ(t) \in \R^{n \times n}$ are $C^1$-families of dynamical matrices and (positively definite) diffusion matrices, respectively; $\bmxi = (\bmxi_1,\dots,\bmxi_n)^T \in \R^n$ is the normalized Gaussian random vector with zero mean and satisfies
\begin{equation}\label{Eq:NormalizedGaussian}
    \langle \bmxi(t)\bmxi^T(s) \rangle = \delta(t-s)\bI,
\end{equation} 
where the bracket, $\delta(t-s)$, and $\bI$ denote the expectation, the Dirac delta function, and the identity matrix, respectively.

The time evolution of the probability distribution of \cref{Eq:General-Process} is given by the following theorem.
\begin{theorem}[Fokker-Plank equation]
    With the notation as above, the probability distribution $P(x,t) = \langle \delta(\bx(t)-x)\rangle$ of the stochastic process $\bx$ satisfies
    \begin{align} \label{Eq:FP-eq}
        \frac{\partial}{\partial t} P(x,t) &= - \sum_{j,k} \bA_{jk}(t) \frac{\partial}{\partial x_j} x_k P(x,t) + \sum_{j,k} \bQ_{jk}(t) \frac{\partial^2}{\partial x_j \partial x_k} P(x,t) \nonumber \\ 
        &= \LFP(t) P(x,t),
    \end{align}
    where $\LFP(t)$ is the Fokker-Planck operator at $t$.  
\end{theorem}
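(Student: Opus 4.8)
The plan is to establish the identity first in its weak (distributional) form against test functions, and then upgrade it to the stated pointwise PDE. First I would note that since $\bA(t)$ and $\bQ(t)$ are $C^1$ (hence continuous, and locally Lipschitz with at most linear growth in $\bx$), the linear system \cref{Eq:General-Process}, read in It\^o form with $\sqrt{2\bQ(t)}\,\bmxi\,dt = \sqrt{2\bQ(t)}\,d\bW_t$ for a standard $n$-dimensional Brownian motion $\bW$, has a unique strong solution $\bx(t)$ with finite moments of all orders on every compact time interval. This makes $P(x,t)$ well defined as the law of $\bx(t)$, and the moment bounds will later justify discarding boundary terms at infinity.

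Next I would fix an arbitrary $\phi \in C_c^\infty(\R^n)$ and compute $\frac{d}{dt}\langle \phi(\bx(t))\rangle$, using $\langle \phi(\bx(t))\rangle = \int \phi(x)\,P(x,t)\,dx$. Applying It\^o's formula to $\phi(\bx(t))$, and using that the quadratic covariation of the components is $d\langle \bx_j,\bx_k\rangle_t = \big(\sqrt{2\bQ(t)}\,\sqrt{2\bQ(t)}^{\,T}\big)_{jk}\,dt = 2\bQ_{jk}(t)\,dt$, one obtains
\begin{align*}
 d\phi(\bx(t)) &= \sum_{j} \frac{\partial \phi}{\partial x_j}(\bx(t))\,\Big( \sum_k \bA_{jk}(t)\,\bx_k(t)\,dt + \big(\sqrt{2\bQ(t)}\,d\bW_t\big)_j \Big) \\
 &\quad + \sum_{j,k} \bQ_{jk}(t)\, \frac{\partial^2 \phi}{\partial x_j \partial x_k}(\bx(t))\,dt .
\end{align*}
Taking expectations annihilates the It\^o-integral (martingale) term, so that
\begin{align*}
 \frac{d}{dt}\int \phi(x)\,P(x,t)\,dx = \int \Big( \sum_{j,k} \bA_{jk}(t)\,x_k\,\frac{\partial \phi}{\partial x_j}(x) + \sum_{j,k} \bQ_{jk}(t)\,\frac{\partial^2 \phi}{\partial x_j \partial x_k}(x) \Big)\,P(x,t)\,dx .
\end{align*}
Integrating by parts in $x$, which is legitimate because $\phi$ has compact support and $P(\cdot,t)$ is integrable with finite second moments, moves the derivatives onto $P$ and gives $\int \phi\,\partial_t P\,dx = \int \phi\big( -\sum_{j,k}\partial_{x_j}(\bA_{jk}(t)\,x_k\,P) + \sum_{j,k}\bQ_{jk}(t)\,\partial_{x_j}\partial_{x_k}P \big)\,dx$; since $\phi$ is arbitrary the integrands coincide, which is precisely \cref{Eq:FP-eq} with $\LFP(t)$ as defined.

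I expect the main obstacle to be regularity bookkeeping rather than any conceptual difficulty: one must argue that $t \mapsto P(\cdot,t)$ is differentiable in the appropriate weak sense so that $\frac{d}{dt}$ passes inside the integral, and that $P(\cdot,t)$ is smooth enough in $x$ for the final pointwise identity to be meaningful, the latter following from hypoellipticity of $\LFP(t)$ since $\bQ(t)$ is positive definite. An alternative route, closer to the physics-style argument suggested by the notation $P(x,t)=\langle\delta(\bx(t)-x)\rangle$, is to differentiate the delta function directly, substitute \cref{Eq:General-Process}, and evaluate $\langle \bmxi_l(t)\,\delta(\bx(t)-x)\rangle$ by Novikov's theorem using \cref{Eq:NormalizedGaussian}; there the one delicate point is the factor $\tfrac12$ coming from the equal-time functional derivative $\delta \bx_k(t)/\delta \bmxi_l(t)$, equivalently the It\^o-versus-Stratonovich convention, which must be fixed consistently with the interpretation adopted for \cref{Eq:General-Process}.
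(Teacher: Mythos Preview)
Your argument is correct, and in fact the paper does not supply its own proof of this theorem at all: it simply states the result and refers the reader to standard SDE textbooks (\O ksendal; S\"arkk\"a--Solin). The route you take---apply It\^o's formula to a compactly supported test function, take expectations to kill the martingale term, and integrate by parts to move derivatives onto $P$---is exactly the standard textbook derivation those references contain, so your proposal is both sound and aligned with what the paper defers to; the alternative Novikov-theorem computation you sketch is likewise standard and equivalent once the It\^o convention is fixed.
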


In this study, we further require that the dynamical function $\bA(t)$ is such that the Fokker-Planck equation \cref{Eq:FP-eq} admits a (periodic) stationary probability distribution.
Though perhaps too restrictive in practice, an example is that each eigenvalue of the dynamical matrix $\bA(t)$ has a negative real part for any time $t$. 

\subsection{The classical LIM} 
The classical LIM approximates a complex dynamical system \cref{Eq:GeneralEq} by a linear Markov model \cref{Eq:LIM-Process} (i.e., constant linear dynamics $\bA(t) \equiv \bA$ and diffusion $\bQ(t) \equiv \bQ$) \cite{Penland1989,Penland1994}.
In the steady state, the statistics of the process $\bx$ are independent of time, and the Fokker-Planck equation \cref{Eq:FP-eq} leads to the following \cite{Lien2024,Penland1994}.

\begin{corollary}[The classical fluctuation-dissipation relation] \label{Cor:FD-relation}
    Let the covariance matrix be $\bC \coloneqq \langle \bx(\cdot) \bx(\cdot)^T \rangle$.
    For a linear Markov system, in the steady state, we have
    \begin{equation} \label{Eq:FD-relation}
        0 = \bA\bC + \bC \bA^T + 2 \bQ.
    \end{equation}
\end{corollary}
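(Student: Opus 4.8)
The plan is to specialize the Fokker--Planck equation \cref{Eq:FP-eq} to the constant-coefficient case $\bA(t)\equiv\bA$, $\bQ(t)\equiv\bQ$ and to extract the second-moment identity from it. In the steady state the probability density $\pst(x)$ is time-independent, so $\partial_t P = 0$ and hence $\LFP \pst = 0$. The idea is then to test this equation against the quadratic monomials $x_j x_k$: multiplying $\LFP\pst = 0$ by $x_j x_k$ and integrating over $\R^n$ gives $\int_{\R^n} x_j x_k\,\LFP\pst(x)\,dx = 0$ for every pair $(j,k)$, which will become the $(j,k)$ entry of the claimed matrix equation.

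Next I would move the derivatives off $\pst$ and onto the monomials by integration by parts. For the drift part $-\sum_{l,m}\bA_{lm}\,\partial_{x_l}(x_m\pst)$, one integration by parts transfers the single derivative to $x_j x_k$, producing the moments $\langle x_m x_k\rangle$ and $\langle x_j x_m\rangle$ contracted with $\bA_{lm}$; recognizing $\bC = \langle\bx\bx^T\rangle$, these assemble into $\bA\bC + \bC\bA^T$. For the diffusion part $\sum_{l,m}\bQ_{lm}\,\partial^2_{x_l x_m}\pst$, two integrations by parts leave $\sum_{l,m}\bQ_{lm}\,\partial^2_{x_l x_m}(x_j x_k) = 2\bQ_{jk}$. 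Collecting the two contributions yields the $(j,k)$ entry of $\bA\bC + \bC\bA^T + 2\bQ$, which vanishes; since $(j,k)$ was arbitrary, \cref{Eq:FD-relation} follows.

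The one point requiring care is the vanishing of the boundary terms in the integrations by parts and, relatedly, the finiteness of the second moments collected in $\bC$. Both are guaranteed by the standing assumption that \cref{Eq:FP-eq} admits a stationary distribution — concretely, when every eigenvalue of $\bA$ has negative real part, $\pst$ is a non-degenerate Gaussian, so it and its derivatives decay super-polynomially and all surface integrals at infinity drop out. An essentially equivalent route avoids the Fokker--Planck operator altogether: apply the It\^o product rule to obtain $d(\bx\bx^T) = (d\bx)\bx^T + \bx(d\bx)^T + 2\bQ\,dt$, take expectations, and use that in the steady state $\tfrac{d}{dt}\langle\bx\bx^T\rangle = 0$; this reproduces \cref{Eq:FD-relation} at once, with the same moment-finiteness caveat. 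I would present the Fokker--Planck derivation for consistency with \cref{Eq:FP-eq}, and the main (mild) obstacle is just the careful index bookkeeping in the integration by parts together with the justification that no boundary contributions survive.
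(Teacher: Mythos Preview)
Your proposal is correct and matches the paper's approach: the paper does not spell out a separate proof of \cref{Cor:FD-relation} (it cites \cite{Lien2024,Penland1994}), but its proof of the periodic generalization \cref{Eq:Periodic-FD-Relation} proceeds exactly by pairing $x_p x_q$ against the Fokker--Planck evolution via the adjoint operator $\LFP^*$, which is your integration-by-parts step in operator form. Specializing that argument to constant $\bA,\bQ$ and $\partial_t\pst=0$ reproduces precisely what you wrote, so your write-up is consistent with the paper.
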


The classical FDR \cref{Eq:FD-relation} connects the linear dynamics and diffusion matrix provided that the covariance matrix is known a prior.
Indeed, the following theorem establishes a one-to-one correspondence between the dynamical matrix and the correlation function of a linear Markov system.
Hence, both $\bA$ and $\bQ$ can be inferred once the correlation function is known \cite{Lien2024,Penland1994}.

\begin{theorem}
    Let the correlation function be given by $\bK(s) \coloneqq \langle \bx(\cdot+s) \bx(\cdot)^T \rangle$. For a linear Markov system \cref{Eq:LIM-Process}, the dynamical matrix $\bA$ satisfies
    \begin{equation} \label{Eq:GreenFunc}
        \bA = \frac{1}{s} \log\big( \bK(s) \bK(0)^{-1} \big)
    \end{equation}
    for any $s>0$, where the $\log$ denotes the matrix logarithm. In fact, the correlation function is the exponential function of the form
    \begin{equation} \label{Eq:LIM-corr-func}
        \bK(s) = e^{\bA \left\vert s \right\vert} \bK(0). 
    \end{equation} 
\end{theorem}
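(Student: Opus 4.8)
The plan is to obtain a first-order linear matrix ODE for $\bK$ in the lag variable $s$ and integrate it. Working in the steady state, fix a reference time $t$ and regard $\bx(t+s)$ as a function of $s\ge 0$ governed by \cref{Eq:LIM-Process}, so that $\frac{d}{ds}\bx(t+s) = \bA\,\bx(t+s) + \sqrt{2\bQ}\,\bmxi(t+s)$. Right-multiplying by the fixed random vector $\bx(t)^\top$, taking expectations, and using stationarity to identify $\langle\bx(t+s)\bx(t)^\top\rangle$ with $\bK(s)$ independently of $t$ yields
\begin{equation*}
  \frac{d}{ds}\bK(s) \;=\; \bA\,\bK(s) \;+\; \sqrt{2\bQ}\,\big\langle \bmxi(t+s)\,\bx(t)^\top \big\rangle , \qquad s>0 ,
\end{equation*}
where differentiation under the expectation is legitimate because of the explicit mild-solution representation of \cref{Eq:LIM-Process} used below.

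The crux is to show that the forcing cross-term vanishes, $\big\langle \bmxi(t+s)\,\bx(t)^\top \big\rangle = 0$ for every $s>0$. This is the non-anticipating property of white-noise forcing: from the variation-of-constants formula $\bx(t) = e^{\bA(t-t_0)}\bx(t_0) + \int_{t_0}^{t} e^{\bA(t-u)}\sqrt{2\bQ}\,\bmxi(u)\,du$, letting $t_0\to-\infty$ in the steady state exhibits $\bx(t)$ as a functional of the noise history $\{\bmxi(u):u\le t\}$ only, whereas $\bmxi(t+s)$ with $s>0$ is, by \cref{Eq:NormalizedGaussian}, uncorrelated with that history and has zero mean, so the product has zero expectation. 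I expect this to be the one genuinely delicate point — it is where causality enters — while the rest is bookkeeping; a fully rigorous treatment would phrase it through the Itô integral against future increments.

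Granting this, $\bK$ solves $\frac{d}{ds}\bK(s) = \bA\,\bK(s)$ on $s>0$ with $\bK(0)=\bC$, hence $\bK(s) = e^{\bA s}\,\bK(0)$ for $s\ge 0$; for $s\le 0$ stationarity gives $\bK(s) = \langle\bx(\cdot+s)\bx(\cdot)^\top\rangle = \langle\bx(\cdot)\bx(\cdot-s)^\top\rangle = \bK(-s)^\top = \bK(0)\,e^{\bA^\top|s|}$, which together with the $s\ge 0$ case is summarized by \cref{Eq:LIM-corr-func} (the two expressions agreeing at $s=0$ since $\bC$ is symmetric). Finally, because $\bQ$ is positive definite the stationary covariance $\bC$ — the solution of the Lyapunov equation \cref{Eq:FD-relation} — is positive definite, hence invertible, so $\bK(s)\bK(0)^{-1} = e^{\bA s}$ for $s>0$; applying the matrix logarithm and dividing by $s$ gives \cref{Eq:GreenFunc}. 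As an alternative to the ODE route, I would note that the same conclusion drops out in one line by computing $\langle \bx(t+s)\bx(t)^\top\rangle$ directly from the variation-of-constants formula and discarding the integral of $e^{\bA(t+s-u)}\sqrt{2\bQ}\,\bmxi(u)$ against the future noise $u\in(t,t+s]$.
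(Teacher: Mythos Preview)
The paper does not actually supply a proof of this theorem: it is stated as background and attributed to the cited references, so there is nothing to compare your argument against. On its own merits your proof is correct and is the standard route --- differentiate $\bK(s)$ in the lag variable, eliminate the forcing cross-term $\langle\bmxi(t+s)\bx(t)^\top\rangle$ via the non-anticipating property (which you justify properly through the variation-of-constants representation), integrate the resulting ODE $\tfrac{d}{ds}\bK=\bA\bK$ to obtain $\bK(s)=e^{\bA s}\bK(0)$ for $s\ge 0$, and invert using the positive definiteness of $\bC$ from the Lyapunov equation \cref{Eq:FD-relation}.

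One small remark worth keeping: for $s<0$ you correctly derive $\bK(s)=\bK(0)\,e^{\bA^\top|s|}$, which is not in general the same matrix as $e^{\bA|s|}\bK(0)$ (equality would require $\bA\bC$ to be symmetric). The paper's compact expression \cref{Eq:LIM-corr-func} is best read as the $s\ge 0$ branch --- the only one used downstream in \cref{Eq:GreenFunc} and \cref{Eq:P-LIM-Dynamics-LIM} --- so your hedged phrasing there is appropriate rather than a gap.
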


In practice, given a time-series data $\{x(t): t = 0, \Delta t, \dots, N\Delta t\} \subset \R^n$, we assume the stationary condition. Then, the correlation function is numerically computed by
\begin{equation} \label{Eq:DiscCorrFuc}
    K(s) = \frac{\sum_{t=0}^{(N-k)\Delta t} x(t+s)x(t)^T}{N-k+1},
\end{equation} 
as in the steady state, the ensemble average is equal to the time average.
Then the constant dynamics $\Alim$ and diffusion $\Qlim$ are estimated based on \cref{Eq:FD-relation,Eq:GreenFunc}


\subsection{The periodic fluctuation-dissipation relation} \label{Chap:P-FD-relation}
Next, we move to a periodic linear Markov system \cref{Eq:CS-LIM-Process}. Without loss of generality, we may assume that the linear dynamics and diffusion are $1$-periodic; that is, $\bA(t+1) = \bA(t)$ and $\bQ(t+1) = \bQ(t)$.
Let $\pst$ denote the periodic solution of \cref{Eq:FP-eq}; that is, $\partial_t \pst = \LFP(t) \pst$ where $\pst(x,t) = \pst(x,t+1)$. 
Hence, the covariance function $\bC(t) \coloneqq \langle \bx(t) \bx(t)^T \rangle$ is also $1$-periodic, and the classical FDR can be extended to a periodic version. 

\begin{theorem}
    The periodic fluctuation-dissipation relation reads 
    \begin{equation} \label{Eq:Periodic-FD-Relation}
        \frac{d\bC}{dt} = \bA(t) \bC + \bC \bA^T(t) + 2\bQ(t).
    \end{equation}
\end{theorem}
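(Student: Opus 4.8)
The plan is to differentiate the covariance $\bC(t)=\langle\bx(t)\bx(t)^T\rangle=\int_{\R^n} x x^T\,\pst(x,t)\,dx$ under the integral sign and feed in the Fokker--Planck equation \cref{Eq:FP-eq}, exactly as in the proof of the classical FDR (\cref{Cor:FD-relation}), the only difference being that here $\pst$ is time-periodic rather than time-independent so the left-hand side $d\bC/dt$ survives instead of vanishing. Concretely, I would fix indices $l,m$ and write
\begin{equation*}
    \frac{d}{dt}\bC_{lm}(t) = \int_{\R^n} x_l x_m\,\partial_t\pst(x,t)\,dx = \int_{\R^n} x_l x_m\,\LFP(t)\pst(x,t)\,dx,
\end{equation*}
then substitute the explicit form $\LFP(t)\pst = -\sum_{j,k}\bA_{jk}(t)\,\partial_{x_j}(x_k\pst) + \sum_{j,k}\bQ_{jk}(t)\,\partial_{x_j}\partial_{x_k}\pst$ and integrate by parts to move the derivatives off $\pst$ and onto the monomial $x_l x_m$.

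The drift term, after one integration by parts, becomes $\sum_{j,k}\bA_{jk}(t)\int \partial_{x_j}(x_l x_m)\,x_k\,\pst\,dx$; using $\partial_{x_j}(x_l x_m)=\delta_{jl}x_m+\delta_{jm}x_l$ this collapses to $\sum_k\bA_{lk}(t)\bC_{km}(t)+\sum_k\bA_{mk}(t)\bC_{lk}(t) = (\bA(t)\bC(t))_{lm}+(\bC(t)\bA^T(t))_{lm}$. The diffusion term, after two integrations by parts, becomes $\sum_{j,k}\bQ_{jk}(t)\int \partial_{x_j}\partial_{x_k}(x_l x_m)\,\pst\,dx$; since $\partial_{x_j}\partial_{x_k}(x_l x_m)=\delta_{kl}\delta_{jm}+\delta_{km}\delta_{jl}$ and $\bQ(t)$ is symmetric, this equals $\bQ_{ml}(t)+\bQ_{lm}(t)=2\bQ_{lm}(t)$. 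Collecting the pieces and reading the identity back in matrix form yields \cref{Eq:Periodic-FD-Relation}.

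The only genuine subtlety — and the step I would flag as the main obstacle — is justifying the interchange of $d/dt$ with the integral and, more importantly, the vanishing of all boundary terms at infinity in the integration by parts (both the $x_l x_m x_k\pst$ type terms from the drift and the $x_l x_m\,\partial\pst$ and $\partial(x_l x_m)\,\pst$ terms from the diffusion). This is where the standing assumption that \cref{Eq:FP-eq} admits a periodic stationary solution does the work: because \cref{Eq:CS-LIM-Process} is linear with Gaussian forcing, $\pst(\cdot,t)$ is for each $t$ a (possibly degenerate, but under our eigenvalue hypothesis nondegenerate) Gaussian density, whose polynomial-weighted derivatives decay super-polynomially, so all moments exist, all boundary terms vanish, and differentiation under the integral is legitimate by dominated convergence using the smooth periodic dependence of $\pst$ on $t$. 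I would state this decay as a short lemma (or simply invoke Gaussianity of $\pst$) and then the computation above is routine. An alternative, essentially equivalent route is to apply It\^o's formula to the matrix-valued process $\bx(t)\bx(t)^T$ and take expectations, where $\langle\sqrt{2\bQ(t)}\,\bmxi\,\bmxi^T\sqrt{2\bQ(t)}^T\rangle$ produces the $2\bQ(t)$ term directly; the moment-existence caveat is the same.
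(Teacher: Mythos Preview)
Your proposal is correct and essentially identical to the paper's proof: the paper also differentiates $\int x_p x_q\,\pst\,dx$, substitutes $\partial_t\pst=\LFP(t)\pst$, and moves the derivatives onto $x_p x_q$, the only cosmetic difference being that it packages the integration-by-parts step as ``apply the adjoint $\LFP^*(t)$ to $x_p x_q$'' rather than writing out each boundary term. Your discussion of the decay/boundary issue and the It\^o alternative goes slightly beyond what the paper spells out, but the core argument is the same.
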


\begin{proof}
    The adjoint Fokker-Planck operator $\LFP^*(t)$ for each $t$ is 
    \begin{equation*}
        \LFP^*(t) = \sum_{i,j} \bA_{ij}(t) x_j\frac{\partial}{\partial x_i} + \sum_{i,j} \bQ_{ij}(t) \frac{\partial^2}{\partial x_i \partial x_j}.
    \end{equation*}
    A direct computation shows
    \begin{align*}
        \LFP^*(t) x_px_q &= \sum_j \bA_{pj}(t) x_j x_q + \sum_j \bA_{qj}(t) x_j x_p + \bQ_{pq}(t). 
    \end{align*}
    Therefore, we have
    \begin{align*}
        \frac{\partial}{\partial t} \int_{\R^n} x_p x_q \pst(x,t) \,dx &= \int_{\R^{n}} x_p x_q \LFP(t) \pst(x,t) \,dx \\
        &= \int_{\R^{n}} \pst(x,t) \LFP^*(t) x_p x_q \,dx \\
        &= \int_{\R^{n}} \pst(x,t) \big( \sum_j \bA_{pj}(t) x_j x_q + \sum_j \bA_{qj}(t) x_j x_p+ \bQ_{pq}(t) \big) \,dx \\
        &= \sum_j \bA_{pj}(t) \langle \bx_j \bx_q \rangle + \sum_j \bA_{qj}(t) \langle \bx_j \bx_p \rangle + \bQ_{pq}(t),
    \end{align*}
    which is equivalent to \cref{Eq:Periodic-FD-Relation} in matrix notation.
\end{proof}

\subsection{Algorithm} \label{Chap:CS-LIM}
The $e$-CS-LIM estimates the time-dependent linear dynamics and diffusion of a general periodic stochastic process by approximating the underlying system with a periodic linear Markov system \cite{Shin2021}.
We present an overview of the fundamental concept.
First, a full period is partitioned into several intervals, and then within each interval, the linear Markov approximation is made so that the classical LIM can be applied to estimate the linear dynamics; finally, the diffusion is estimated via the periodic FDR \cref{Eq:Periodic-FD-Relation}.
Though the interval-wise linear Markov approximation works decently well in practice, we point out that $e$-CS-LIM is a hybrid-type model in the sense that the first half is based on approximation while the second half relies on the analytic formula; hence, it does not analytically reconstruct the periodic linear dynamics and diffusion of the linear stochastic process \cref{Eq:CS-LIM-Process}, as mentioned in \cref{Chap:Intro}.


The pseudo-code of $e$-CS-LIM is provided in \cref{Alg:CS-LIM}.
For notational convenience, whenever there is no confusion, we write $\partial_s$ for $\frac{\partial}{\partial s}$, we do not distinguish the vectorial index $j$ and the time $t = j\Delta t$, and the index should be taken modulo the full period.
As a special case, the original CS-LIM sets the time coordinate $T(j)$ for the estimated dynamical and diffusion matrices to be the center of the $j$-th interval, and the derivative of the covariance function is computed by the central difference method.
From now on, when referring $e$-CS-LIM, we use the forward difference method to compute the derivative of the covariance function and set the time coordinate $\Tcslim(j)$ to be the center of the $j$-th interval plus $\frac{1}{2} k \Delta t$ since the $j$-th dynamical matrix is computed by the $j$-th interval and its $k\Delta t$ lag (see \cref{Fig:Stencil}).
In \cref{Chap:NumExp}, we will see that the original CS-LIM introduces a phase shift while the $e$-CS-LIM picks up the correct phase due to the choice of the finite different scheme and the refined time coordinate (see also \cref{Fig:Periodic-Diffusion}). 

\begin{figure}[htbp]
\centering
    \begin{subfigure}[b]{0.49\textwidth}
    \centering
    \includegraphics[width=2.4in]{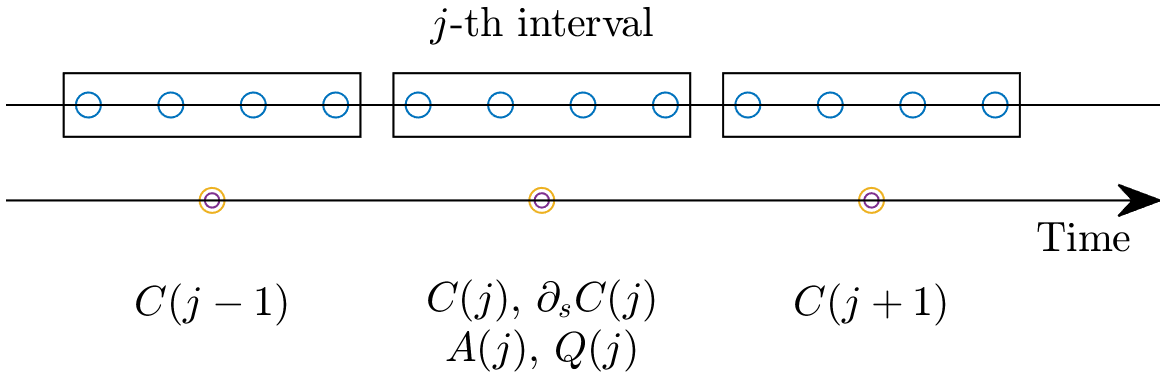}
    \caption{The original CS-LIM. \label{Fig:CS-LIM-Stencil}}
    \end{subfigure}
    \begin{subfigure}[b]{0.49\textwidth}
    \centering
    \includegraphics[width=2.4in]{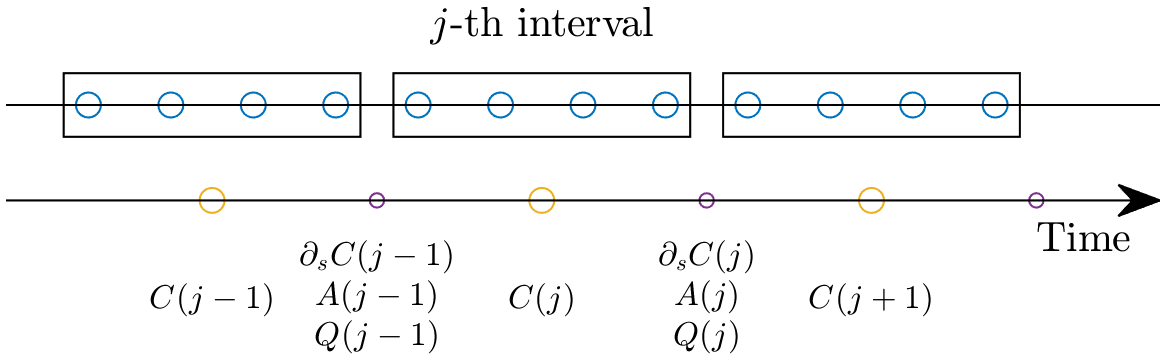}
    \caption{$e$-CS-LIM. \label{Fig:e-CS-LIM-Stencil}}
    \end{subfigure}
    \caption{ The stencil configurations. The blue dots on the top indicate the sampling time and the yellow and purple in the bottom are the time coordinates of the covariance function and the other three variables, respectively. \label{Fig:Stencil}}
\end{figure}

\begin{algorithm}
\caption{$e$-CS-LIM}\label{Alg:CS-LIM}

\begin{algorithmic}[1]

\Input
\Desc{$\{x(t)\}$}{A cyclostationary time-series data}
\Desc{$\Delta t$}{The sampling time step}
\Desc{$L$}{The number of periods}
\Desc{$M$}{The number of intervals}
\Desc{$k$}{The time lag}
\EndInput
\Output
\Desc{$\Acslim$}{The estimated linear dynamics}
\Desc{$\Qcslim$}{The estimated diffusion}
\Desc{$\Tcslim$}{The time coordinate for $\Acslim$ and $\Qcslim$}
\EndOutput
\algrule
\State Subdivide the full periodic into $M$ subintervals $\{I_j\}$. 
\For{$j = 0:M-1$}
\State Compute the correlation $K_j(0)$ and $K_j(k\Delta t)$ using $\{x(t)\}$ and $\{x(t+k\Delta t)\}$ where $t \in I_j$.
\State Compute $\Acslim(j)$ via \cref{Eq:GreenFunc}.
\State Compute the time $\Tcslim(j)$ that stands for the $j$-th interval.
\EndFor
\For{$j = 0:M-1$}
\State Compute $\Qcslim(j)$ via \cref{Eq:Periodic-FD-Relation}.
\EndFor

\end{algorithmic}
\end{algorithm}

\section{\textit{l}-CS-LIM} \label{Chap:Periodic-LIM}

As the $e$-CS-LIM \textit{interval-wise approximates} the underlying stochastic process by a linear Markov system, we aim to build an (analytic) inverse model for the periodic linear Markov system \cref{Eq:CS-LIM-Process}.
With the help of the periodic FDR, we are left to derive an explicit formula for the periodic dynamics from the statistics of $\bx$. 

\subsection{Mathematical background} \label{Chap:Periodic-LIM-Math}

In this subsection, we apply the setup and notation in \cref{Chap:P-FD-relation} and study the correlation function $\bK(s,t) \coloneqq \langle \bx(t+s) \bx^T(t) \rangle$, which is $1$-periodic in the time variable $t$.
The following theorem states that the linear dynamics is encoded in the local behavior of the correlation function at the origin of the lag variable $s$.

\begin{theorem}
    The first right derivatives of the correlation function with respect to the lag variable $s$ at the origin satisfies
    \begin{equation} \label{Eq:P-LIM-Dynamics}
        \frac{\partial}{\partial s} \bK(s,t)|_{s = 0} = \bA(t) \bC(t).
    \end{equation}
\end{theorem}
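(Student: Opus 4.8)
The plan is to parallel the classical identity $\bK(s) = e^{\bA|s|}\bK(0)$, replacing the matrix exponential by the propagator of the non-autonomous deterministic part. Let ${\bf G}(t_2,t_1)$ denote the state-transition matrix solving $\frac{\partial}{\partial t_2}{\bf G}(t_2,t_1) = \bA(t_2){\bf G}(t_2,t_1)$ with ${\bf G}(t_1,t_1) = \bI$; this is well defined since $\bA$ is $C^1$. By the variation-of-constants representation of the linear SDE \cref{Eq:CS-LIM-Process}, for every $s \ge 0$,
\begin{equation*}
    \bx(t+s) = {\bf G}(t+s,t)\,\bx(t) + \int_t^{t+s}{\bf G}(t+s,\tau)\sqrt{2\bQ(\tau)}\,\bmxi(\tau)\,d\tau .
\end{equation*}

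First I would right-multiply this identity by $\bx^T(t)$ and take expectations. The first term contributes ${\bf G}(t+s,t)\bC(t)$. For the stochastic integral, each $\bmxi(\tau)$ with $\tau \in (t,t+s]$ is uncorrelated with $\bx(t)$, because $\bx(t)$ is determined by the driving noise only up to time $t$; combined with \cref{Eq:NormalizedGaussian} this forces $\langle \bmxi(\tau)\bx^T(t)\rangle = 0$ for $\tau > t$, so the whole integral drops out in expectation. This yields $\bK(s,t) = {\bf G}(t+s,t)\bC(t)$ for all $s \ge 0$. Differentiating in $s$ and evaluating at $s = 0^+$, using $\frac{\partial}{\partial s}{\bf G}(t+s,t)\big|_{s=0} = \bA(t){\bf G}(t,t) = \bA(t)$, gives exactly \cref{Eq:P-LIM-Dynamics}.

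An alternative route that bypasses the propagator is to differentiate $\bK(s,t) = \langle\bx(t+s)\bx^T(t)\rangle$ directly in $s$ for $s>0$, substitute the SDE for $\frac{d}{ds}\bx(t+s)$, and obtain $\partial_s\bK(s,t) = \bA(t+s)\bK(s,t) + \sqrt{2\bQ(t+s)}\,\langle\bmxi(t+s)\bx^T(t)\rangle$; the cross term vanishes by the same causality argument for $s>0$, and letting $s\downarrow0$ with $\bK(\cdot,t)$ continuous and $\bA$ continuous recovers the claim. Either way, the substantive point — and the step I expect to be the main obstacle — is the rigorous handling of the white-noise term: justifying the interchange of $\frac{\partial}{\partial s}$ with the expectation and establishing $\langle\bmxi(t+s)\bx^T(t)\rangle = 0$ for $s>0$ (equivalently, that $\bx(t)$ is adapted to the noise history up to time $t$). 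This is precisely why only the \emph{right} derivative appears: at $s=0$ an Itô correction would contribute the $\bQ(t)$ term that surfaces in the periodic FDR \cref{Eq:Periodic-FD-Relation}, whereas for $s>0$ strictly there is no such contribution. The remaining ingredients — smoothness and the semigroup-type properties of ${\bf G}$, and continuity of $\bK(\cdot,t)$ at $s=0$ — are routine.
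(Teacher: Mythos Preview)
Your proof is correct but takes a genuinely different route from the paper. The paper works through the Fokker--Planck operator: it writes $\bK_{pq}(s,t) = \int_{\R^n} x_p\, e^{\int_t^{t+s}\LFP(u)\,du}\pst(x,t)\,x_q\,dx$, passes to the adjoint $\LFP^*(t)$, expands the exponential to first order in $s$, and reads off $\partial_s\bK_{pq}(0^+,t) = \sum_j \bA_{pj}(t)\langle\bx_j\bx_q\rangle$ from $\LFP^*(t)x_p = \sum_j \bA_{pj}(t)x_j$. Your argument instead stays on the pathwise/SDE side: the variation-of-constants formula plus the adaptedness of $\bx(t)$ to the noise history gives $\bK(s,t) = {\bf G}(t+s,t)\bC(t)$ for $s\ge 0$, from which the claim is immediate. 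What your approach buys is a stronger intermediate statement --- the full non-autonomous analogue of \cref{Eq:LIM-corr-func} --- and it avoids the Fokker--Planck machinery entirely; the causality step you flag is indeed the only substantive point, and it is standard. What the paper's approach buys is internal consistency: the same adjoint-operator computation already appeared in the proof of the periodic FDR, so reusing it here keeps the presentation uniform and makes the parallel between \cref{Eq:Periodic-FD-Relation} and \cref{Eq:P-LIM-Dynamics} transparent at the level of $\LFP^*(t)x_px_q$ versus $\LFP^*(t)x_p$.
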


\begin{proof}
    
A direct computation shows that 
\begin{align*}
    \LFP^*(t) x_p &= \sum_j \bA_{pj}(t) x_j.
\end{align*}

Next, we compute the derivatives of the correlation function \cite{Jung1985,Risken1989}.
\begin{align*}
    \bK_{pq}(s,t) &= \int_{\R^{n}} x_p e^{ \int_t^{t+s} \LFP(u) \,du } \pst(x,t) x_q \,dx \\
    &= \int_{\R^{n}} x_q \pst(x,t) e^{ \int_t^{t+s} \LFP^*(u) \,du } x_p \,dx \\
    &= \int_{\R^{n}} x_q \pst(x,t) (\bI + s\LFP^*(t)) x_p + O(s^2) \,dx \\
    &= \langle \bx_p \bx_q \rangle(t) + s \int_{\R^{n}} x_q \pst(x,t) \LFP^*(t) x_p \,dx + O(s^2).
\end{align*}
Hence, the first derivative with respect to the lag variable $s$ is 
\begin{align*}
    \frac{\partial}{\partial s} \bK_{pq}(s,t) |_{s = 0} 
    &= \int_{\R^n} x_q \pst(x,t) \LFP^*(t) x_p \,dx \\
    &= \int_{\R^n} x_q \pst(x,t) \sum_j \bA_{pj}(t) x_j \,dx \\
    &= \sum_j \bA_{pj}(t) \langle \bx_j \bx_q \rangle,
\end{align*}
which is equivalent to \cref{Eq:P-LIM-Dynamics} in matrix notation.
\end{proof}


Like the previous LIMs, $l$-CS-LIM solves both $\bA(t)$ and $\bQ(t)$ from a given observation data based on \cref{Eq:Periodic-FD-Relation,Eq:P-LIM-Dynamics}. 
Moreover, $l$-CS-LIM is an analytic inverse model of \cref{Eq:CS-LIM-Process}.

\subsection{Algorithm}

In practice, given a cyclostationary time-series data $\{x(t)\}$, the $l$-CS-LIM approximates the underlying system by a periodic linear Markov system and estimates the linear dynamics and the random forcing, as $e$-CS-LIM does. 
The pseudo-code of the $l$-CS-LIM is given in \cref{Alg:P-LIM} in which for $\{x(t)\}$ consisting of $L$ full periods, the correlation function is calculated by
\begin{equation} \label{Eq:CorrFunc-s-t}
    K(s,t) = \frac{\sum_{k=0}^{L-1} x(k+t+s)x(k+t)^T}{L},
\end{equation} 
and its first right derivative in the lag variable $s$ is computed by the forward difference method.
\Cref{Eq:CorrFunc-s-t} approximates $\bK$ under the steady-state condition provided the sampling size is sufficiently large. 
Though $\bK$ is (one-sided) differentiable from a theoretical viewpoint, the numerical correlation function $K$ is noisy in both variables. 
In particular, preprocessing or postprocessing should be done to denoise.
We will discuss the numerical details in \cref{Chap:NumExp}. 

In this article, we compute $\partial_s C$ by the forward difference method and set the time coordinate of $\Aplim(t)$ and $\Qplim(t)$ to be in the middle of $t$ and $t+\Delta t$.
In fact, we remark that in our numerical experiment (\cref{Chap:1d-study,Chap:Higher-Dimension}), the choice of the time coordinate for $l$-CS-LIM is relatively minor significant.

\begin{algorithm}
\caption{$l$-CS-LIM}\label{Alg:P-LIM}
\begin{algorithmic}[1]
\Input
\Desc{$\{x(t)\}$}{A cyclostationary time-series data}
\Desc{$\Delta t$}{The sampling time step}
\Desc{$L$}{The number of periods}
\EndInput
\Output
\Desc{$\Aplim$}{The estimated linear dynamics}
\Desc{$\Qplim$}{The estimated diffusion}
\Desc{$\Tplim$}{The time label for $\Aplim$ and $\Qplim$}
\EndOutput
\algrule
\For{$t$ in $[0,1)$}
\State Compute $\Cplim(t) = K(0,t)$ and $K(\Delta t,t)$ by \cref{Eq:CorrFunc-s-t}.
\State Compute $\Aplim(t)$ via \cref{Eq:P-LIM-Dynamics}.
\State Compute $T_l(t)$.
\EndFor

\For{$t$ in $[0,1)$}
\State Compute $\Qplim(t)$ via \cref{Eq:Periodic-FD-Relation}.
\EndFor

\end{algorithmic}
\end{algorithm}

\subsection{The relationship among LIMs} \label{Chap:LIM-Relationships}

First, we note that the $l$-CS-LIM is indeed an extension of the classical LIM. 
If the linear dynamics and diffusion matrix are constant in time, the correlation function is independent of time variable $t$ and \cref{Eq:P-LIM-Dynamics} can be written as 
\begin{equation} \label{Eq:P-LIM-Dynamics-LIM}
    \frac{d}{ds} \bK(s) |_{s=0} = \bA \bC
\end{equation}
which is equivalent to the right derivative of \cref{Eq:LIM-corr-func}.
At the same time, for a general periodic stochastic process, we can view $l$-CS-LIM as approximating the underlying system with a linear Markov system at each instant as \cref{Eq:P-LIM-Dynamics,Eq:P-LIM-Dynamics-LIM} share the same form.
Therefore, we may say that $l$-CS-LIM utilizes the pointwise linear Markov approximation contrary to the interval-wise version adopted in $e$-CS-LIM.

From a practical viewpoint, at a given time $t$, the $l$-CS-LIM applies the forward difference method to the correlation function at two consecutive points $s = 0, \Delta t$, which amounts to fit these two points by a straight line. On the other hand, the $e$-CS-LIM calculates the correlation function at the origin and the point at $k$ time-step away, then applying \cref{Eq:GreenFunc}, which corresponds to an exponential curve fitting.
Since the linear fitting is the first-order approximation of the exponential fitting, the $e$-CS-LIM converges to the $l$-CS-LIM in the limit of $k=1$, $M=\frac{1}{\Delta t}$, $\Delta t \ll 1$ and $L \gg 1$.
As a result, the $l$-CS-LIM can be viewed as an instantaneous version of $e$-CS-LIM.
This also justifies the prefixes $e$-CS-LIM and $l$-CS-LIM.

Finally, it is natural to question whether the classical LIM accurately estimates the mean dynamics and diffusion of the periodic linear Markov system \cref{Eq:CS-LIM-Process}.
We observe that the correlation function used in the classical LIM is the integral of $\bK(s,t)$ over the lag variable $s$.
This integration does not commute with non-linear operators, such as the matrix logarithm, and the integral of products generally does not equal the product of integrals.
Consequently, the classical LIM does not analytically reconstruct the mean state.
Nevertheless, in practice, the estimated linear dynamics is sufficiently close to the mean state and reveals the dynamics-relevant information of the underlying system.
See also \cref{Table:Intercomparison}.


\section{Numerical Experiments} \label{Chap:NumExp}

In this section, we examine the performance of LIMs by applying these models to the dataset $\{x(t)\}$ sampled from periodic linear Markov system \cref{Eq:CS-LIM-Process}.
More preciously, with periodic linear dynamics $\bA(t)$ and diffusion $\bQ(t)$, a sample path is generated by the Euler approach with timestep $dt = 0.002$ from $T_0 = 0$ to $T_f = L$; then we make a sparse observation by taking $\Delta t = 5 \, dt = 0.01$ to obtain $\{x(t)\}$.
As the accuracy of models depends on the stochastic nature of the dataset, each experiment will be repeated $1024$ times and the consequent statistics (e.g. median) will be used to evaluate the models.
To quantify the performance, we use the relative error $e_\bX$ measured by Frobenius norm $\left\Vert \cdot \right\Vert_F$ for a matrix-valued quantity $\bX$ and the model output $X_\text{Model}$; that is,
\[ e_{\bX} = \frac{\left\Vert X_\text{Model} - \bX \right\Vert_F}{\left\Vert \bX \right\Vert_F}. \]
On the other hand, for a time sequence of matrices $\bX(t)$ and the model output $X_\text{Model}(t)$, the relative error $E_\bX$ is measured by the numerical integration of the following formula,
\begin{align} \label{Eq:Relative_Error_Formula}
    E_{\bX} \approx \frac{ \big( \int_0^1 \left\Vert X_\text{Model}(t) - \bX(t) \right\Vert_F^2 \, dt \big)^{\frac{1}{2}} }{ \big( \int_0^1 \left\Vert \bX(t) \right\Vert_F^2 \, dt \big)^{\frac{1}{2}} }.
\end{align}

\subsection{An ideal \texorpdfstring{$1$}{Lg}-dimensional case study} \label{Chap:1d-study}
We start with a $1$-dimensional case to demonstrate the lack of temporal structure of the classical LIM, the phase shift introduced in the original CS-LIM, and the noise issue of $l$-CS-LIM due to the noisy covariance function. 
The number of intervals and time-lag in $e$-CS-LIM are chosen to be $M = k = 10$ to distinguish the pointwise linear Markov approximation from the interval-wise version.
As an ideal case study, we suppose that the linear dynamics and diffusion fluctuate sinusoidally; that is, 
\begin{equation}\label{Eq:Periodic-Diffusion-Process}
    \frac{d}{dt} \bx = \big(1+ \ba \cdot \pi \sin(2 \pi t)\big) \overline{\bA}\bx + \sqrt{2 \cdot \big(1+\bb\cdot\pi\sin(2 \pi t)\big) \overline{\bQ}} \bmxi,
\end{equation}
where mean dynamics $\overline{\bA} = -1$, dynamical fluctuation intensity $\ba = 0.2$, mean diffusion $\overline{\bQ} = 1$, and diffusion fluctuation intensity $\bb = 0.3$.

\cref{Fig:Periodic-Diffusion} shows the numerical test for a sample path of \cref{Eq:Periodic-Diffusion-Process} with $T_f = 5000$.
It is apparent that the classical LIM does not reveal any temporal structure, but meanwhile, it successfully captures the mean dynamics and diffusion, showing the validity of linear Markov approximation. 
At the same time, even if the noisy nature of SDE, both the original CS-LIM and $e$-CS-LIM exhibit a clean temporal sinusoidal trend, since the information within an interval is collected, averaging out the noisy components, and a larger time step and time lag in the computation of $\partial_s C$ and $\Acslim$ lead to the robustness to the noise.
However, the original CS-LIM results are shifted to the left in both linear dynamics and diffusion while the $e$-CS-LIM results almost match the ground truth. 
On the other hand, we notice that the numerical covariance function $C(t) = K(0,t)$ used by $l$-CS-LIM exhibits a noisy nature, leading to an even more drastically oscillating first derivative if the finite difference method is applied directly \cite{Chapra2018,Chartrand2011,STICKEL2010}. 
If none of the filters is applied to the covariance function, the reconstructed profiles by $l$-CS-LIM, especially $\Aplim$, turn out to be noisy as well. 
Though seemingly ill-behaved, a simple application of convolution filtering reveals the correct temporal trend of dynamics; for instance, the solid blue line in \cref{Fig:Periodic-Diffusion} shows the result of the moving average filter (only time coordinate coincided with $\Tcslim$ are plotted) which exhibits the sinusoidal trend of $\bA(t)$.
Intriguingly, the reconstructed $\Qplim$ behaves more stable than $\Aplim$, probably due to the appropriate choice of finite difference method and the idealization of this study.

\begin{figure}[htbp]
\centering
    \centering
    \includegraphics[width=5in]{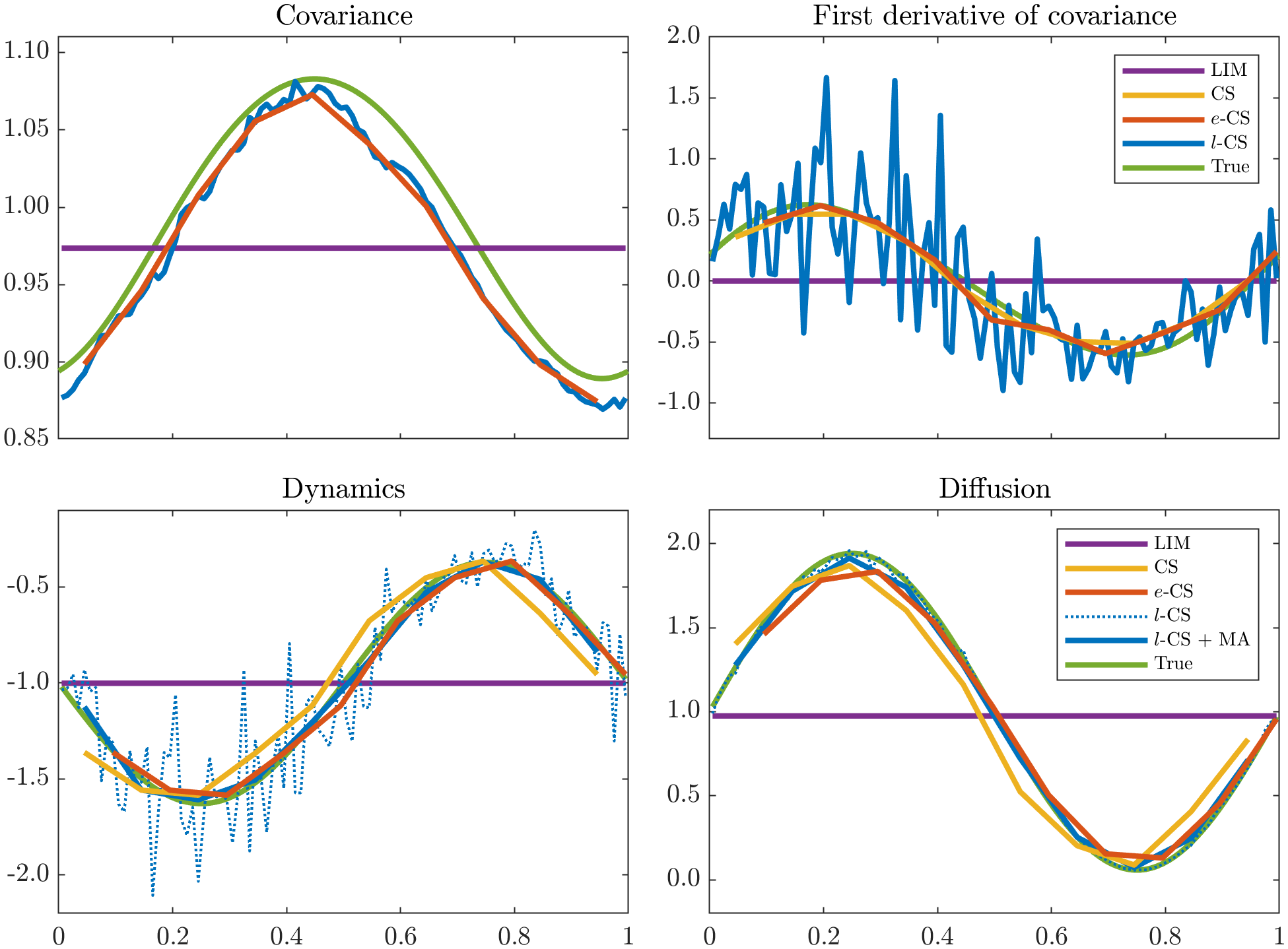}
    \caption{The ground truth and the LIM results for a sample path of \cref{Eq:Periodic-Diffusion-Process} with $T_f = 5000$. For the results of $l$-CS-LIM, the dashed lines are $\Aplim$ and $\Qplim$; the solid lines are $A_\text{MA}$ and $Q_\text{MA}$ sampled on $\Tcslim$.  }
    \label{Fig:Periodic-Diffusion}
\end{figure}


Now we quantitatively evaluate and compare the performance of each model.
As the use and choice of filters depend on the practical problem at hand, for a moment, no filter is applied.
To remove the influence of noise, especially in $l$-CS-LIM, we evaluate the performance of each model via the reconstructed mean dynamics $\overline{A_\text{Model}}$ and the fluctuation intensity $a_\text{Model}$ by a sine wave fitting that minimizes 
\begin{align*}
    \min_{\overline{A}, a, \phi} \left\Vert A_\text{Model}(t) - \overline{A} \cdot \big(1 + a \cdot \pi \sin(2 \pi (t+\phi) )\big) \right\Vert_F,
\end{align*}
and $\overline{Q_\text{Model}}$ and $b_\text{Model}$ by the same formula. 

\cref{Fig:Periodic-Diffusion_Accuracy} shows the distribution of the relative errors with different sampling sizes $T_f \in \{ 100, 1000, 5000 \}$. 
In general, regardless of $T_f$, the $l$-CS-LIM performs comparably to $e$-CS-LIM in terms of dynamics and achieves a lower relative error in diffusion.
We notice that the classical LIM exhibits fairly accurate mean values and the original CS-LIM already gives a decent result, indicating that both the linear Markov approximation and its interval-wise counterpart work well in our study. 
However, \cref{Fig:Periodic-Diffusion_Phase} demonstrates a consistent left phase-shifting ($\phi \approx 0.05 > 0$ for dynamics and $\approx 0.03 > 0$ for diffusion) in the original CS-LIM results, which is not obvious in the $e$-CS-LIM. Such difference in the solutions and their corresponding phase shifts can be explained by the different stencil configurations shown in \cref{Fig:Stencil}. In the original CS-LIM framework, the exponential fitting involves the dataset in the adjacent interval but the time coordinate is not adjusted accordingly, which creates a phase shift. Therefore, the proposed e-CS-LIM in this study, with refined stencil configuration, is more preferable for real-world applications to avoid the uncertainty introduced by phase shifts in solutions.

\begin{figure}[htbp]
\centering
    \centering
    \includegraphics[width=5in]{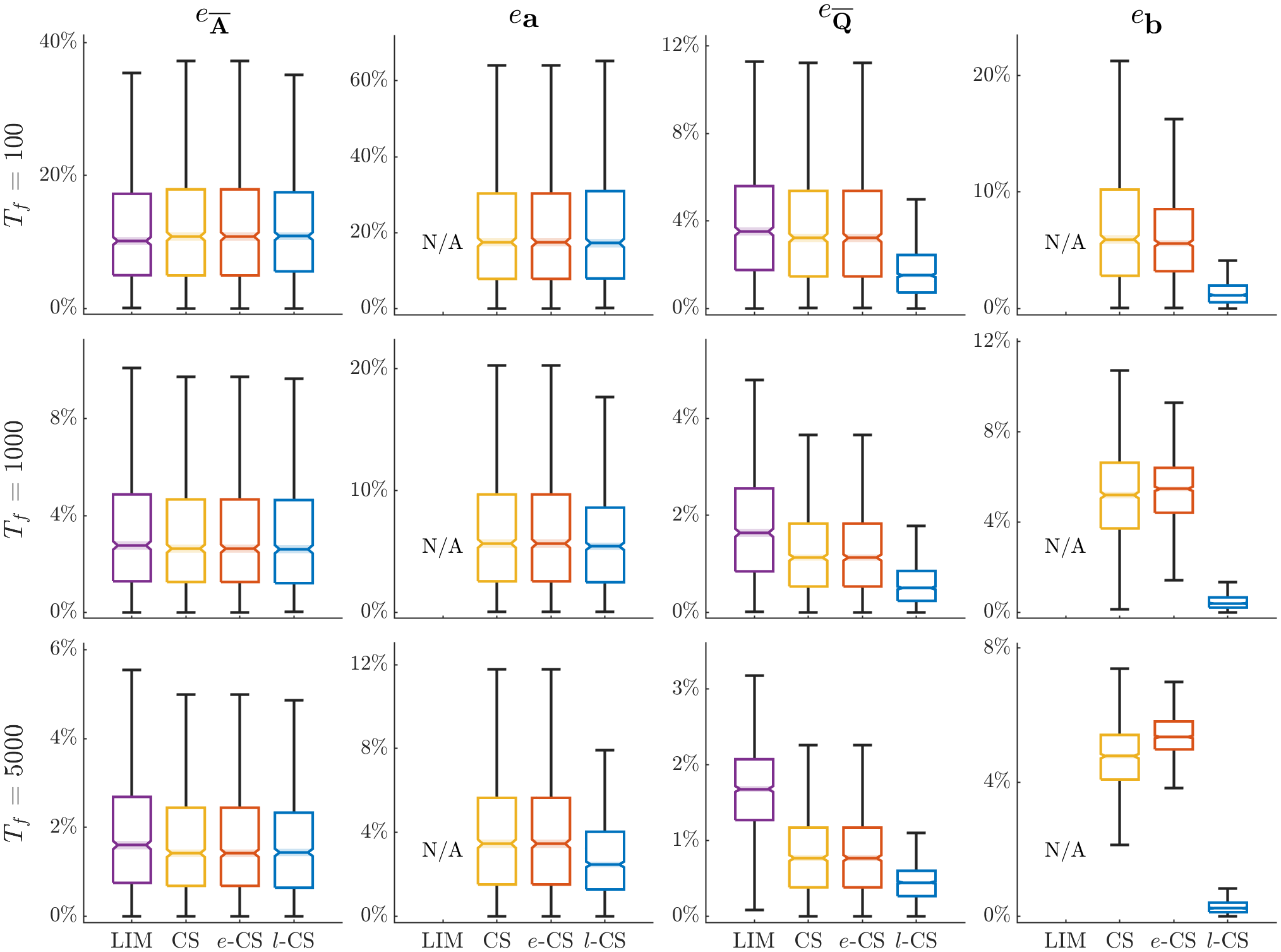}
    \caption{ The distribution of the relative errors of sine wave fitting for each model. }
    \label{Fig:Periodic-Diffusion_Accuracy}
\end{figure}

\begin{figure}[htbp]
\centering
    \centering
    \includegraphics[scale = 0.8]{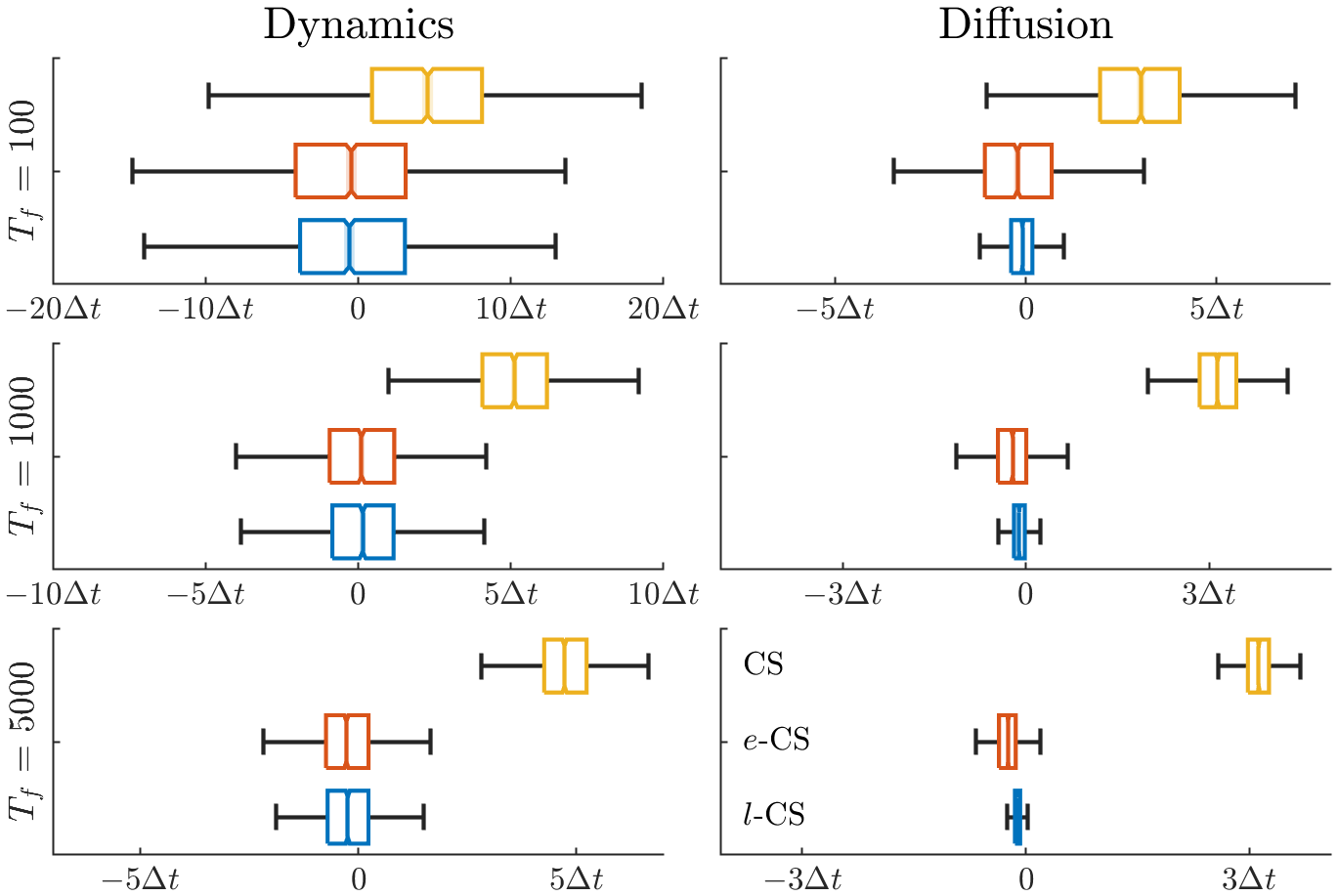}
    \caption{ The distribution of $\phi$ for each model. The CS-LIM, $e$-CS-LIM, and $l$-CS-LIM from the top to the bottom are represented by yellow, orange, and blue, respectively. }
    \label{Fig:Periodic-Diffusion_Phase}
\end{figure}

In practice, the temporal structures of $\bA(t)$ and $\bQ(t)$ are important, and on some occasions, we have no prior knowledge of the shape of fluctuation, meaning that noise reduction should be applied on $l$-CS-LIM results. 
\cref{Fig:Periodic-Diffusion-Filter} demonstrates the application of a variety of filters (MA: moving average, LP: low-pass, and GW: Gaussian weight) on the reconstruction $\Aplim$, and each resulting curve $A_\text{Filter}$ reveals the underlying sinusoidal behavior of linear dynamics and diffusion. 
In addition, we note that the filtered dynamics $A_\text{Filter}(t)$ agree in a neighborhood of $t = 0.75$ where the random forcing is limited.
We evaluate the goodness of the filters by the relative errors $E_{l}$ (no filter), $E_{\text{MA}}$, $E_{\text{LP}}$, and $E_{\text{GW}}$, and summarize the results in \cref{Fig:Periodic-Diffusion-Filter}. 
The numerical test for this ideal study implies that applying filters can effectively reduce the relative error by more than half.
Indeed, the moving average filter already yields a decent reconstruction.

\begin{figure}[htbp]
\centering
    \centering
    \includegraphics[width=5in]{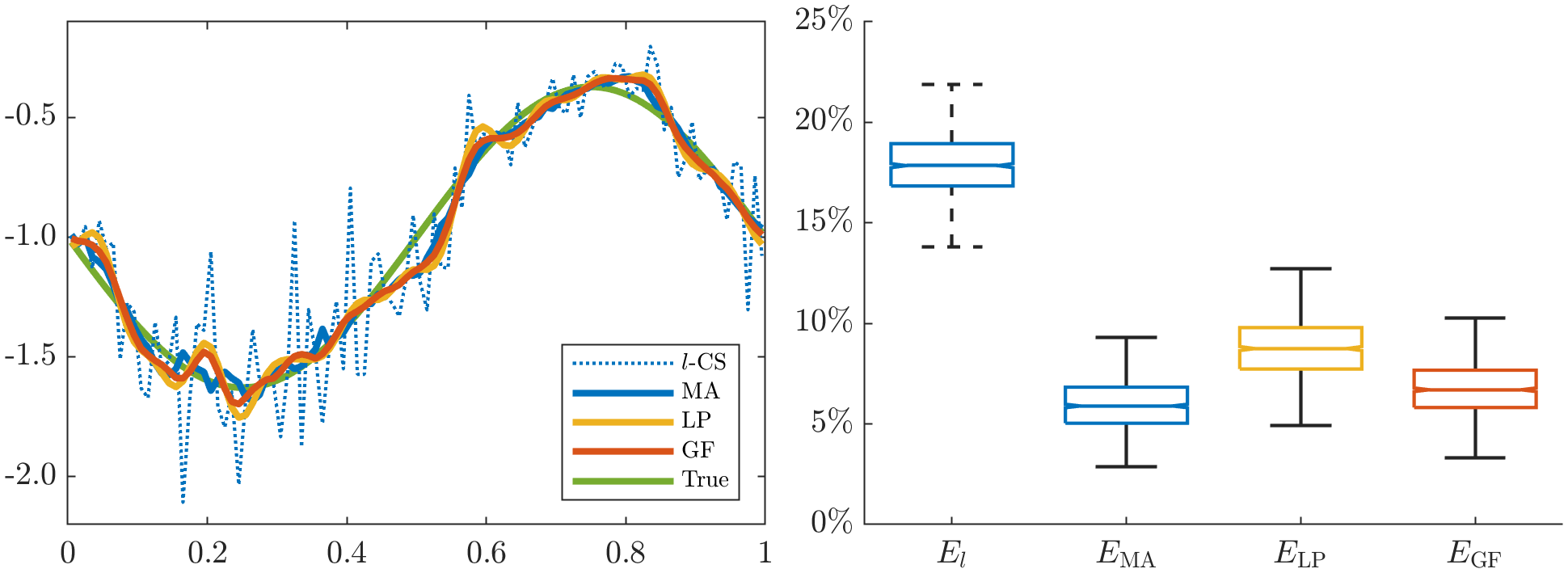}
    \caption{The reconstruction by $l$-CS-LIM and the application of filters.}
    \label{Fig:Periodic-Diffusion-Filter}
\end{figure}


\subsection{A higher-dimensional case study} \label{Chap:Higher-Dimension}

With the same setup as in the previous section, we evaluate the performance of LIMs in higher-dimensional cases without assuming the preknowledge of the sinusoidal fluctuation, since in real-world applications, it may be difficult to know the shape of fluctuation in advance.
The underlying dynamics and diffusion are set to fluctuate sinusoidally with the fluctuation intensities $\ba = 0.2$ and $\bb = 0.3$ as in \cref{Chap:1d-study}, the mean dynamics $\overline{\bA}$ randomly generated such that the magnitude of each entry is smaller than $5$ and each eigenvalue of $\bA$ has a negative real part, and the mean diffusion $\overline{\bQ}$ randomly generated such that it is positively definite.

To compare $l$-CS-LIM with the original CS-LIM and $e$-CS-LIM on the same basis, we take the average of $\Aplim$ and $\Qplim$ on each interval applied in the original CS-LIM and $e$-CS-LIM, and use the relative errors of the resulting curves $\widetilde{\Aplim}$ and $\widetilde{\Qplim}$ as the performance of $l$-CS-LIM.


\cref{Fig:Higher_Dimensional} shows the relative $L^2$-error $E_\bA$ and $E_\bQ$ for each model.
For small sampling size $T_f = 100$, the classical LIM may outperform the other models, but this is merely due to the fact that it adopts a safe strategy by neglecting the temporal structure.
As $T_f$ grows, its relative $L^2$-errors are stuck at a certain level while the relative errors of the CS-LIMs decrease.
As in the $1$-dimensional case, the classical LIM already yields fairly low relative errors $e_{\overline{\bA}}$ and $e_{\overline{\bQ}}$ in mean values, but CS-LIMs may indeed perform better, as shown in \cref{Fig:Higher_Dimensional_mean} since the linear Markov approximation may be an oversimplification of a periodic dynamical system.
On the other hand, the original CS-LIM makes a great improvement compared to the classical LIM, especially when the sampling is sufficient, by taking the temporal structure into consideration. 
However, due to the phase shift, its $L^2$-error $E_\bA$ and $E_\bQ$ eventually reach limitation.
For $T_f = 5000$, the original CS-LIM does not perform as superior as the other CS-LIMs.

In general, the $e$-CS-LIM performs better among all CS-LIMs in linear dynamics, as shown in \cref{Fig:Higher_Dimensional}, since it is designed to be more robust to noisy components. 
This also demonstrates the validity of interval-wise linear Markov approximation in higher dimensional cases.
On the other hand, the $l$-CS-LIM is subject to noise effect which is a classical shortcoming of using the finite difference method with a small timestep $\Delta t$, leading to a slightly larger $E_\bA$ even after taking the average over each interval. 

Overall, $l$-CS-LIM is better than $e$-CS-LIM for diffusion, which can be attributed to the order of operations: the $l$-CS-LIM first computes the time-dependent profiles $\Aplim$ and $\Qplim$, then averaging out the oscillation over each interval, while the $e$-CS-LIM averages the data first, then performs the computation. 
Though subtle, $e$-CS-LIM implicitly replaces the integral of products with the product of integrals, which is equivalent to use the following formula
\begin{align} \label{Eq:Order}
    \int_{I_j} \frac{d}{dt} \bC(t) dt - \big(  \int_{I_j} \bA(t) \, dt \, \int_{I_j} \bC(t) \, dt + \int_{I_j} \bC \, dt \, \int_{I_j} \bA^T \, dt + 2 \int_{I_j} \bQ \, dt \big)  \approx 0,
\end{align} 
where $I_j$ is the $j$-th interval, to compute $\Qcslim(j) \approx \int_{I_j} \bQ \, dt$, causing a larger $E_\bQ$ compared to the $l$-CS-LIM results.
The relationship and intercomparison among LIMs are summerized in \cref{Table:Intercomparison}.

Finally, \cref{Fig:More_Oscillation} shows the convergence of $e$-CS-LIM to $l$-CS-LIM. For a fixed $\Delta t$ and $T_f$, the relative difference\footnote{ The relative difference of two time-series of matrices is defined by a similar formula as \cref{Eq:Relative_Error_Formula}. } between the outputs of both time-dependent dynamics and diffusion decreases as the number of subdivision $M = 1/\rho$ increases. 
The difference further decreases as time-span $T_f$ increases from $100$ to $5000$. 
However, we emphasize that the convergence does not mean that the relative error of $e$-CS-LIM monotonically decreases since $l$-CS-LIM does not always achieve a superior performance due to the noise effect.

Before moving to the application, we note that the above discussion holds provided the sampling points within a period are sufficient (i.e., $\Delta t \ll 1$).
However, if the sampling points are not enough (or equivalently, the sampling interval is large), $l$-CS-LIM no longer outperforms $e$-CS-LIM in either dynamics or diffusion.
This may be attributed to the finite-difference-based nature of $l$-CS-LIM, and once again implying the effectiveness of interval-wise linear Markov approximation used by $e$-CS-LIM.

\begin{sidewaysfigure}[ht]
    \includegraphics[width=\textwidth]{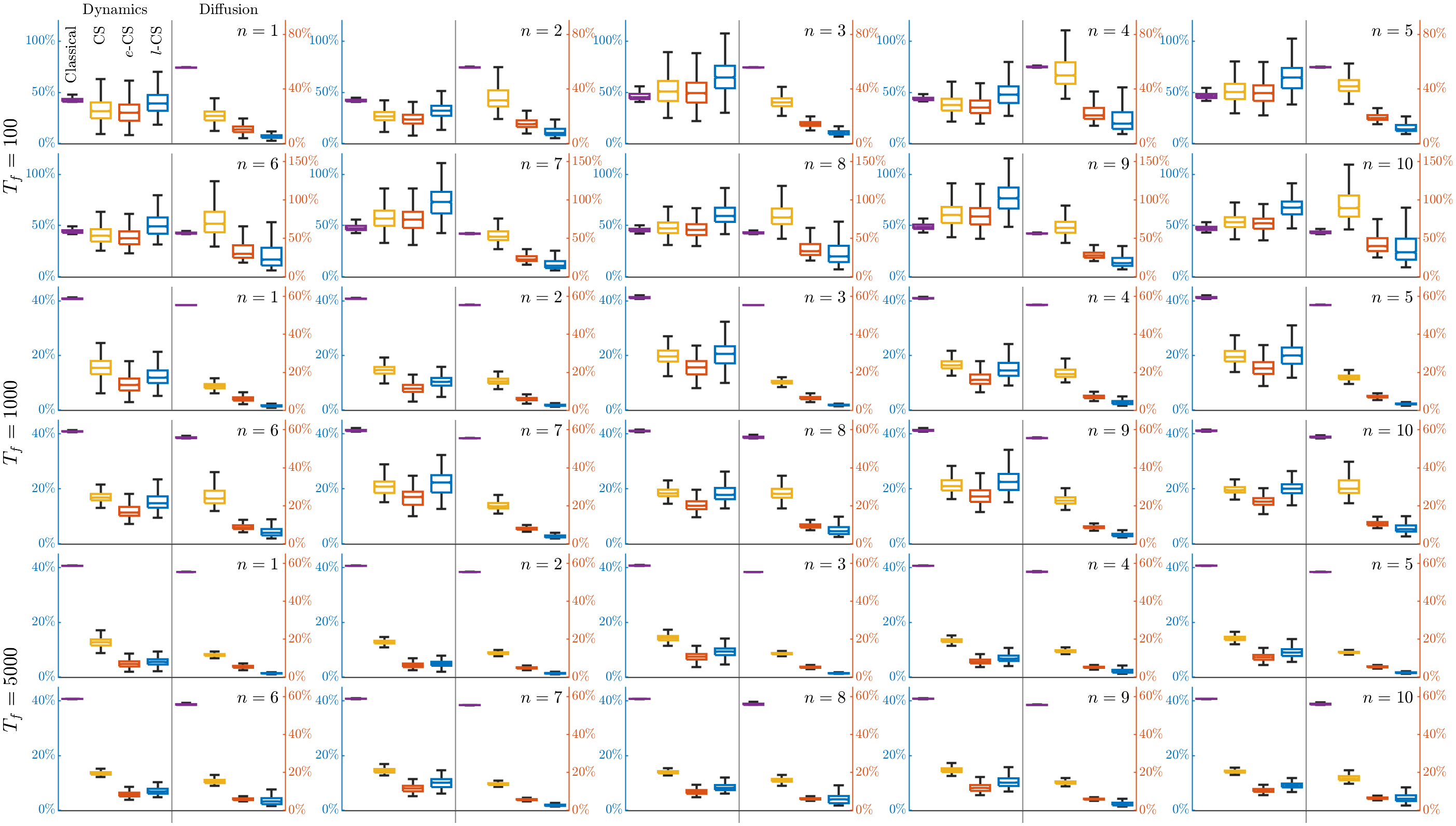}
    \caption{The distribution of the relative $L^2$-errors $E_\bA$ (left-panel) and $E_\bQ$ (right-panel) for each model across dimension $n$ and sampling size $T_f$ over $1024$ trials. In each panel, the classical LIM, original CS-LIM, $e$-CS-LIM, and $l$-CS-LIM from the left to the right are represented by purple, yellow, orange, and blue, respectively.}
    \label{Fig:Higher_Dimensional}
\end{sidewaysfigure}

\begin{sidewaysfigure}[ht]
    \includegraphics[width=\textwidth]{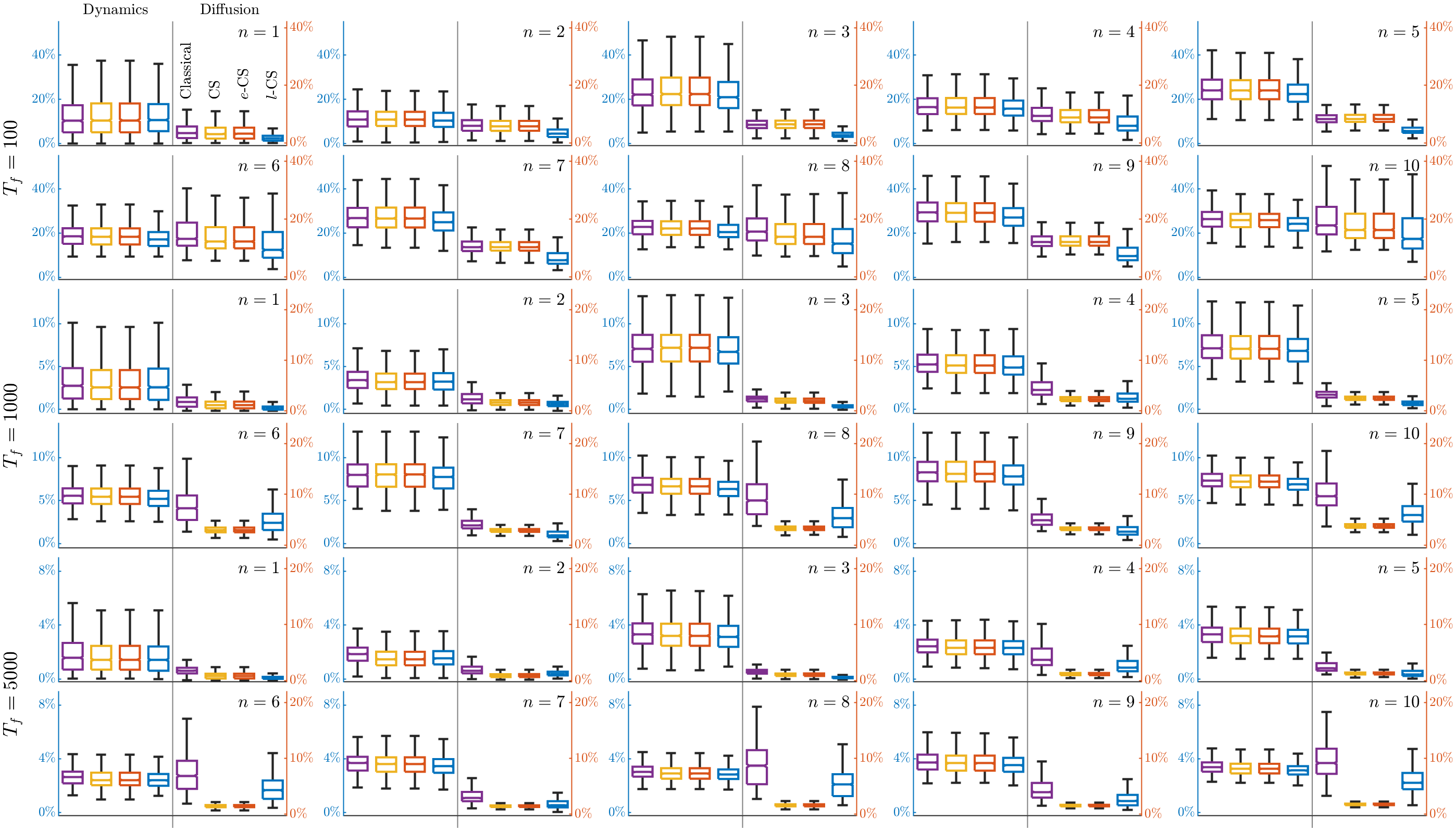}
    \caption{ The distribution of the relative errors $e_{\overline{\bA}}$ (left-panel) and $e_{\overline{\bQ}}$ (right-panel) for each model across dimension $n$ and sampling size $T_f$ over $1024$ trials. In each panel, the classical LIM, original CS-LIM, $e$-CS-LIM, and $l$-CS-LIM from the left to the right are represented by purple, yellow, orange, and blue, respectively.}
    \label{Fig:Higher_Dimensional_mean}
\end{sidewaysfigure}

\begin{sidewaysfigure}[ht]
    \includegraphics[width=\textwidth]{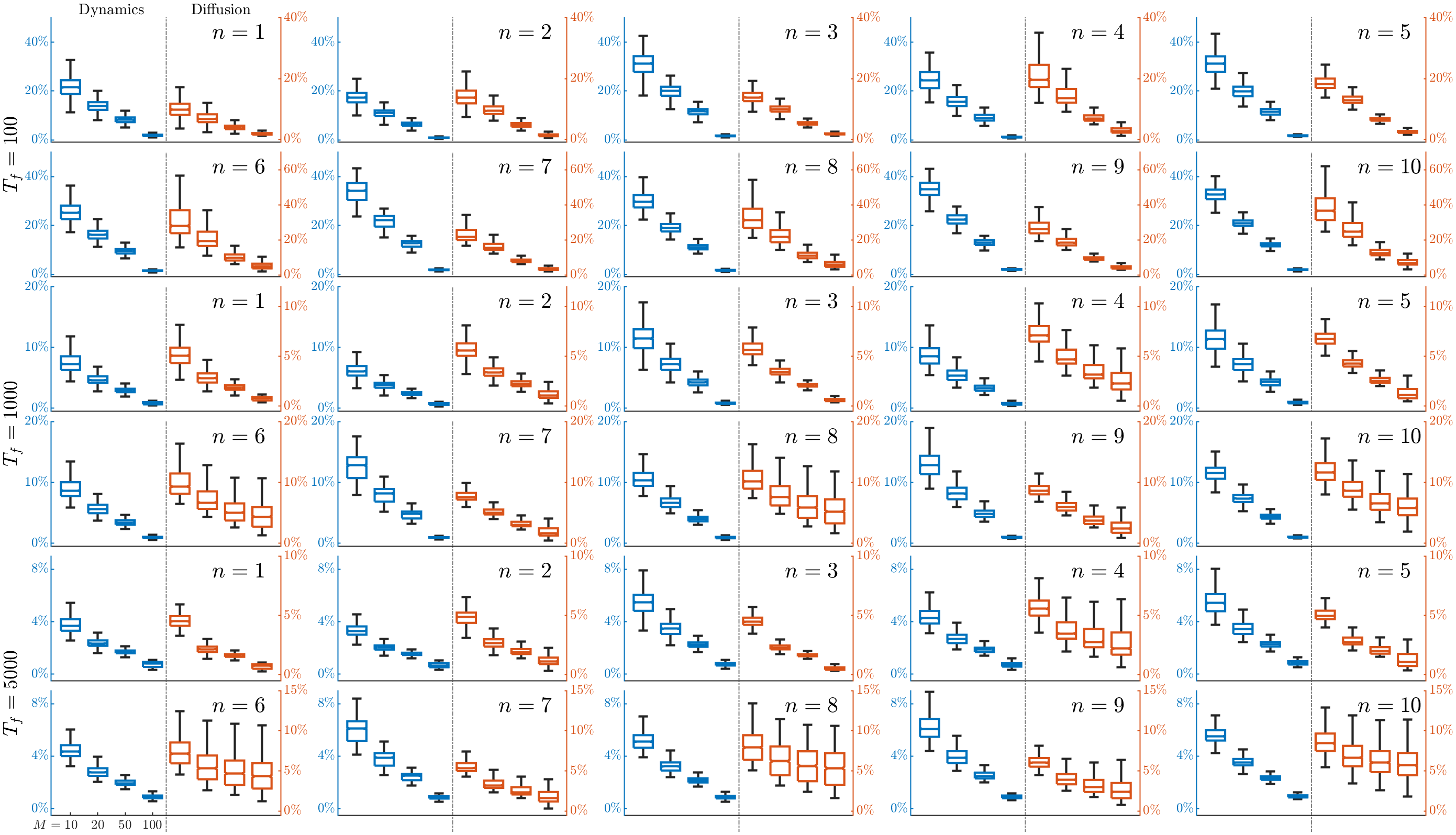}
    \caption{ The distribution of the relative difference of the $e$-CS-LIM and $l$-CS-LIM's output across dimension $n$ and sampling size $T_f$ over $1024$ trials (dynamics: left-panel and blue; diffusion: right-panel and orange). The time lag $\rho$ is chosen to be the reciprocal of the number of subdivision $M$. }
    \label{Fig:More_Oscillation}
\end{sidewaysfigure}

\begin{sidewaystable}
\footnotesize
\centering
\caption{Intercomparison of linear inverse models}
\footnotesize
\centering
\begin{threeparttable}
\begin{tabular}{ c c c c c }
    \toprule
    {} & {Classical-LIM} & {Original CS-LIM} & {$e$-CS-LIM} & {$l$-CS-LIM}  \\
    \midrule
    \makecell{Mathematical \\ Background} & \makecell{Analytic correlation \\ function \& FDR} & \multicolumn{2}{c}{\makecell{Application of classical LIM \\ over each interval \& periodic FDR}} & \makecell{The right derivative of \\ correlation function \& periodic FDR} \\
    \midrule
    \makecell{Relationship with \\ linear Markov system} & {Linear Markov approximation} & \multicolumn{2}{c}{Interval-wise linear Markov approximation} & {Pointwise linear Markov approximation} \\
    \midrule
    \makecell{Estimating the \\ dynamics by} & {Exponential fitting} & \multicolumn{2}{c}{Exponential fitting} & {Linear fitting} \\
    \midrule
    \makecell{Performance on the \\ periodic LMS\tnote{\romannumeral1}} & \makecell{Only providing mean \\ state information} & \makecell{Better than classical LIM \\ but not optimal} & {Best in dynamics} & {Best in diffusion} \\
    \midrule
    Pros. & {Sufficient sampling data} & {--} & {Robust to noise} & {An analytic inverse model} \\
    \midrule
    Cons. & {No temporal structure being identified} & {A phase shift} & {Relying on approximation} & {Subject to noise} \\
    \bottomrule
\end{tabular} \label{Table:Intercomparison}
\begin{tablenotes}
    \item Under the condition of sufficient sampling data.
    \item[\romannumeral1] Periodic LMS = Periodic linear Markov system \cref{Eq:CS-LIM-Process}. 
\end{tablenotes}
\end{threeparttable}
\end{sidewaystable}

\subsection{A real-world example: Ni\~{n}o 3.4 SST Index}

The Ni\~{n}o 3.4 SST index, defined as the temperature average over 5S-5N and 170-120W, is a real-world cyclostationary monthly time-series data whose anomaly is widely used to monitor and predict El Ni\~{n}o and La Ni\~{n}a events \cite{Capotondi2015,Okumura2019}.
Simply speaking, the SST variability can be considered as the superposition of atmospheric variability (noise perturbation) and subsurface processes (background state) \cite{Deser2010,Lou2020}, and hence can be viewed as a periodically driven stochastic process. 
Previous studies have emphasized the seasonal variations of ENSO \cite{Cane1986,Chen2021,Chen2022,Levine2015,Shin2021,Webster1992}. 
Still, only a few of them quantified the relative roles of the seasonal variations of the predictable SST dynamics and unpredictable stochastic forcing \cite{Shin2021}.
Therefore, we apply the $e$-CS-LIM and $l$-CS-LIM to the anomaly of the Ni\~{n}o 3.4 SST index from $1884$ to $2020$ to quantify the linear time-dependent dynamics and random forcing. 
However, since the ENSO criteria are complicated and are beyond the scope of this article, we focus on the extreme ENSO peaks. 
These peaks are defined by the time at which the anomaly is at its maximum over the preceding and following $6$ months with $\left\vert \text{anomaly} \right\vert \ge 2$ $^\circ$C.
We denote the number of extreme ENSO peaks as \#EEP.
\Cref{Fig:Nino_Data} shows the observed SST anomaly and the occurrence of extreme ENSO peaks.

\cref{Fig:Nino_P_LIM} reveals the seasonal cycles of dynamics and diffusion: a sinusoidal shape for linear dynamics while a bimodal profile centered in spring and mid-autumn for diffusion.
The reconstructed profiles indicate that during the late spring and summer, though the background oceanic system pulls the index back to the equilibrium, the atmospheric system intends to determine the phase (either positive, negative, or neutral\footnote{close to zero}) of the upcoming ENSO peak though random forcing.
Once the phase is determined, the oceanic system then acts as a driving force for the development of ENSO peaks in autumn, and through atmospheric forcing, there is a chance for the anomaly to surpass the threshold of $\pm 2$ $^\circ$C, leading to potential extreme ENSO peaks in winter and early spring.
During March and April, the strong dynamical restoring forcing pulls the system into its equilibrium state, while the random forcing also strongly drives the system apart from its mean state, making it difficult to predict whether the index will go up or down in the upcoming months, which may be related to the so-called ENSO prediction barrier \cite{Cane1986,Levine2015,Webster1992}. 

For each model, we generate a $1024$-member ensemble of $137$ years for SST anomaly by numerically integrating $A_\text{Model}$ and $Q_\text{Model}$ with a timestep $dt = 0.001$ (year) and taking the mean value over a month as the representative, and summarize the results in \cref{Fig:Nino_Reconstruct}.
Both $e$-CS-LIM and $l$-CS-LIM consistently exhibit \#EEPs around $20$ times in the median over a $137$-year period, and such extreme peaks are more likely to appear in winter than summer, as the observed ENSO phase locking \cite{Chen2021,Chen2022}. 
Though the anomaly of Ni\~{n}o 3.4 SST index is merely an indicator for ENSO, we emphasize that CS-LIMs can be applied to higher-dimensional climate variables, providing spatialtemporal coherent information of a complex system.

\begin{figure}[htbp]
\centering
    \begin{subfigure}[b]{1\textwidth}
    \centering
    \includegraphics[width=3.0in]{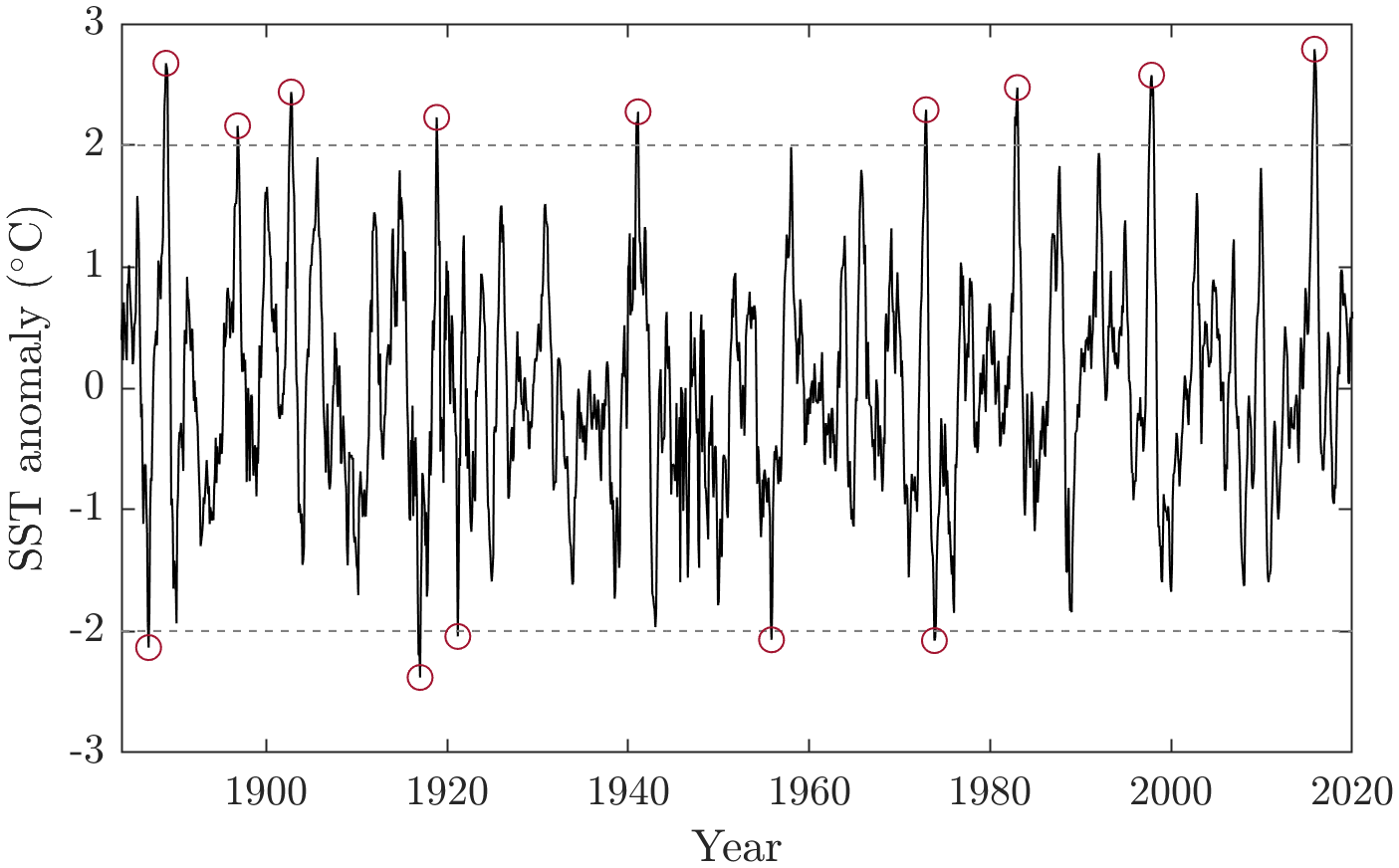}
    \caption{The anomaly of the Ni\~{n}o 3.4 SST index from 1884 to 2020 (with the seasonal cycle and global warming trend removed). The circle indicates the occurrence of an extreme ENSO peak and the dashed lines specify the $\pm 2^\circ$C threshold. } \label{Fig:Nino_Data}
    \end{subfigure}
    \begin{subfigure}[b]{1\textwidth}
    \centering
    \includegraphics[width=4.0in]{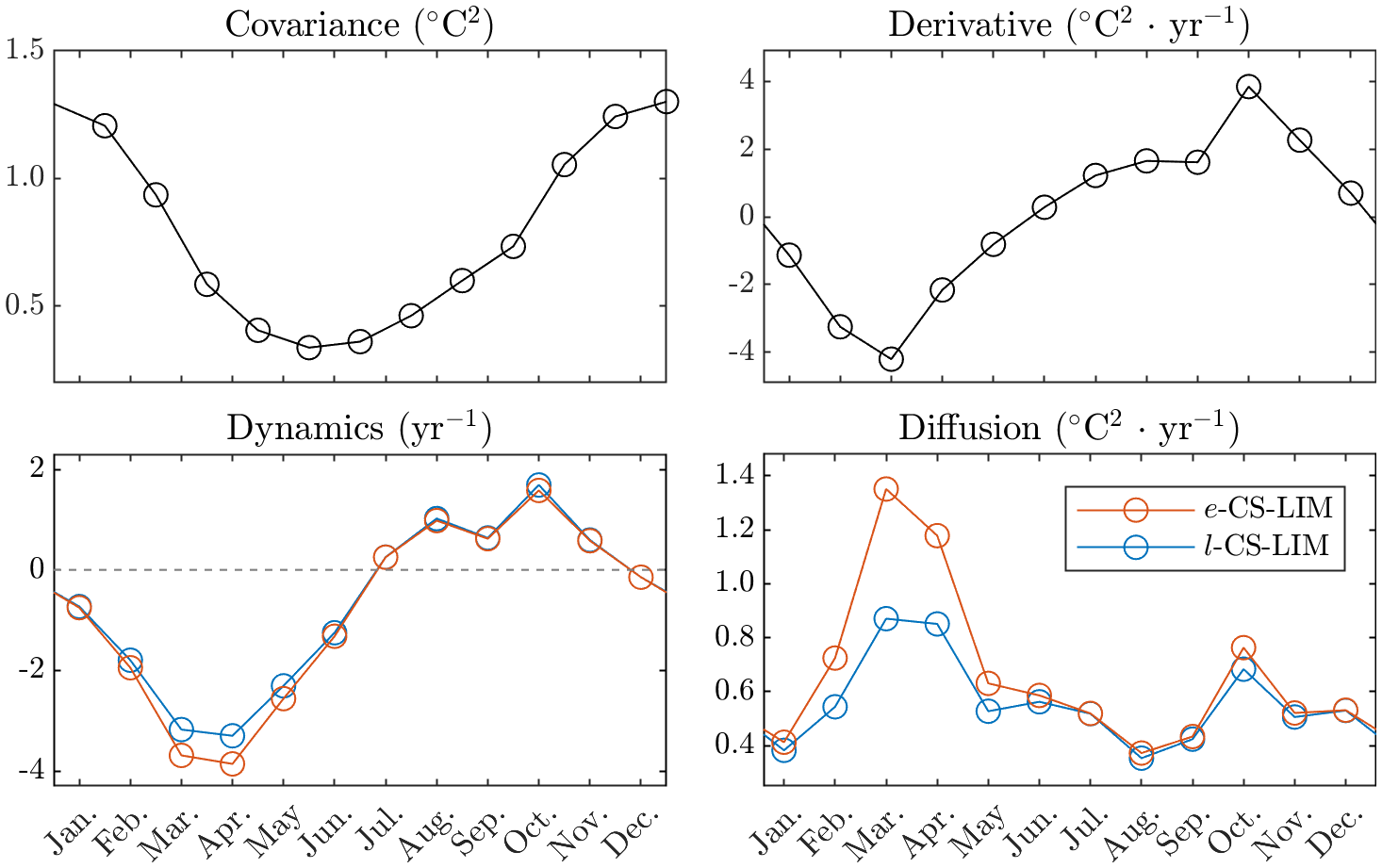}
    \caption{ The upper panels show the covariance function and its first derivative for the SST anomaly. The lower panels demonstrate the model results. The $x$-label represents the first day of each calendar month. Notice that the covariance and the $x$-label are alternating. No filter is applied.} \label{Fig:Nino_P_LIM}
    \end{subfigure}
    \begin{subfigure}[b]{1\textwidth}
    \centering
    \includegraphics[width=3.2in]{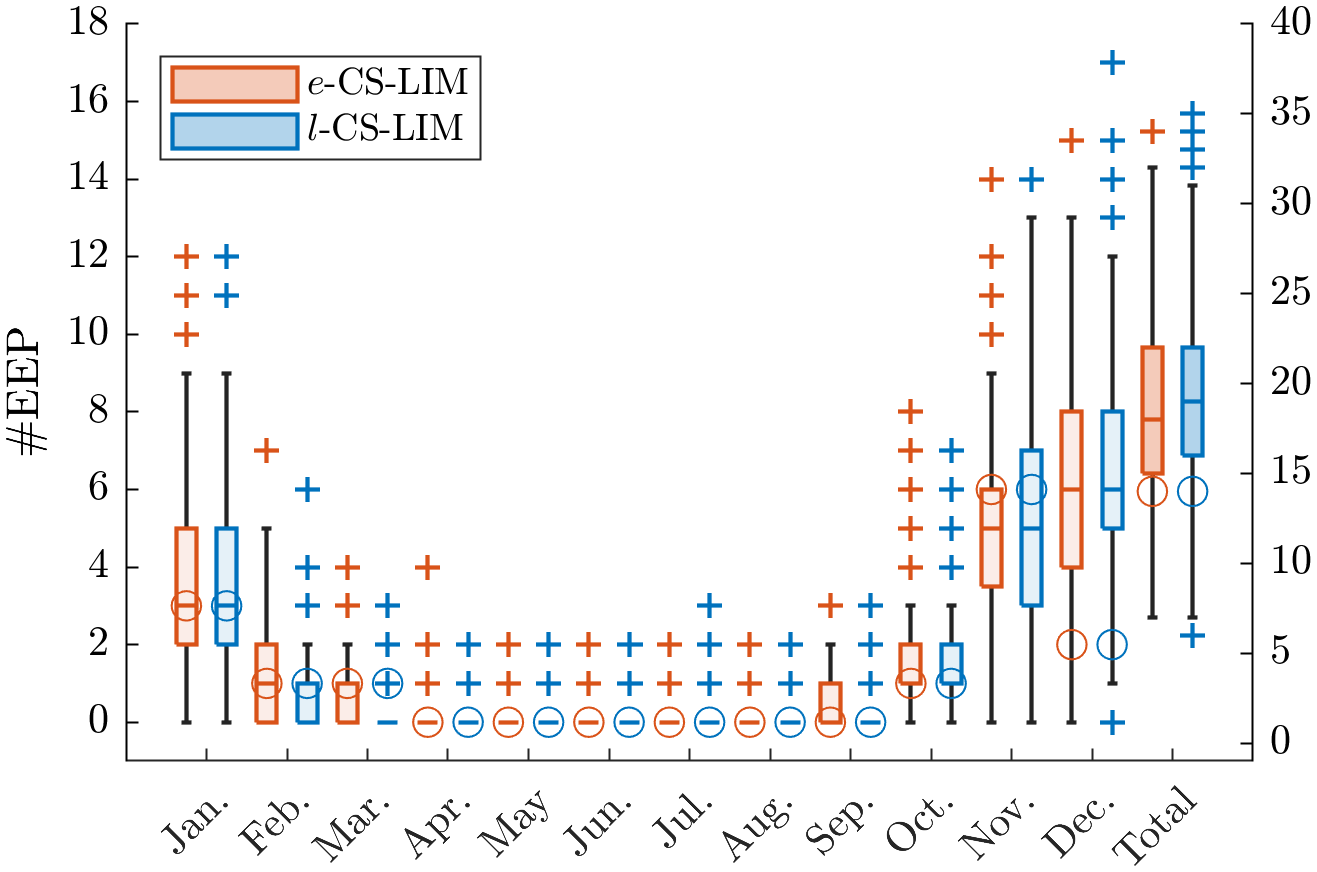}
    \caption{ The statistics of extreme ENSO peaks. 
    The circle indicates the observed \#EEP in a given month (left axis) over $1884$ to $2020$, and the total \#EEP over the whole $137$ years (right axis). The box plots show the \#EEP statistics for the ensembles. 
    The marker $+$ is the outlier.
    } \label{Fig:Nino_Reconstruct}
    \end{subfigure}
    \caption{The numerical study of the Ni\~{n}o 3.4 SST index anomaly.} \label{Fig:Nino}
\end{figure}


\section{Conclusion}

In this article, we study the family of CS-LIMs, a class of linear inverse models that estimates linear dynamics and random forcing of cyclostationary time-series data by approximating the underlying complex system with a periodic linear Markov system.
In particular, the original CS-LIM is optimized to the $e$-CS-LIM, and a novel linear inverse model called $l$-CS-LIM is proposed.
We have discussed the mathematical background of $e$-CS-LIM and showed that for a periodic linear Markov system, the time-dependent dynamical matrix at each instant is characterized by the right derivatives of the correlation function in the lag variable, which serves as the ground of $l$-CS-LIM. 
Moreover, the $e$-CS-LIM is built upon the interval-wise linear Markov approximation while in the $l$-CS-LIM case, it amounts to a pointwise linear Markov approximation.
We have also shown that $e$-CS-LIM and $l$-CS-LIM are closely related in the sense that the latter can be viewed as an instantaneous version of the former. 

The numerical experiments have shown that the classical LIM fails to capture the temporal trend of the system; the original CS-LIM, though greatly improves the classical LIM results, introduces an artificial phase shift.
Meanwhile, the $e$-CS-LIM effectively picks up the correct phase and reaches a better performance in linear dynamics among CS-LIMs in terms of relative $L^2$-error, verifying the validity of interval-wise linear Markov approximation in practice. 
In addition, the $e$-CS-LIM is designed to be more robust to noise. 
On the other hand, though based on analytic formulas, the $l$-CS-LIM is subject to noise, leading to an oscillating output, especially in dynamics. 
Nevertheless, an application of the moving average filter effectively removes the noise effect, revealing the temporal structure of the underlying system.
At the same time, as not excessively relying on approximation, the $l$-CS-LIM shows slightly worse but comparable results in dynamics and exhibits a significantly lower relative $L^2$-error in diffusion.

In principle, the CS-LIMs can be applied to a wide range of cyclostationary time series. 
In this study, we have applied the CS-LIMs to the real-world Ni\~{n}o SST 3.4 index, data for investigating El Ni\~{n}o and La Ni\~{n}a phenomena in climate sciences.
The CS-LIM results indicate a strong seasonal dependency, especially in dynamics, and can well explain the development of extreme ENSO peaks, consistent with our current understanding.
Furthermore, we have re-integrated the CS-LIMs outputs, and seen that the reproduced Ni\~{n}o index captures the occurrence of extreme ENSO peaks, in agreement with observation.
Though this ENSO study is a simplification of the complex earth system and merely serves as a demonstration of the potential of CS-LIMs, we believe that both $e$-CS-LIM and $l$-CS-LIM will lead to different insights into our understanding of the complex climate system and other fields of study. 







\bibliographystyle{abbrv}
\bibliography{Reference}
\end{document}